    \def\qed{\hfill$\sqcap\kern-8.0pt\hbox{$\sqcup$}$\\}
    \def\beq{\begin{eqnarray}}
    \def\eeq{\end{eqnarray}}
    \def\beqq{\begin{eqnarray*}}
    \def\eeqq{\end{eqnarray*}}
    \def\re{\textnormal {Re}}
    \def\im{\textnormal {Im}}
    \def\e{{\mathbb E}}
    \def\r{{\mathbb R}}
    \def\c{{\mathbb C}}
    \def\d{{\textnormal d}}
    \def\i{{\textnormal i}}
\newtheorem{theorem}{Theorem}
\newtheorem{lemma}{Lemma}
\newtheorem{proposition}{Proposition}
\newtheorem{corollary}{Corollary}
\theoremstyle{definition}
\newtheorem{remark}{Remark}
\title{On the convergence of the Gaver-Stehfest algorithm}
\author{
A. Kuznetsov
\thanks{{Research supported by the
Natural Sciences and Engineering Research Council of Canada.}}  \\ \\
Dept. of Mathematics and Statistics\\  York University \\
4700 Keele Street 
\\Toronto, ON \\ M3J 1P3,  Canada 
 }
\date{Current version: \today}
\begin{document}

\maketitle

\begin{abstract}
\bigskip
The Gaver-Stehfest algorithm for numerical inversion of Laplace transform was developed in
the late 1960s. Due to its simplicity and good performance it is becoming increasingly more popular in such diverse areas as Geophysics, Operations Research and Economics, Financial and Actuarial Mathematics, Computational Physics and Chemistry. Despite the large number of applications and numerical studies, this method has never been rigorously investigated. In particular, it is not known whether the Gaver-Stehfest approximations converge and what is the rate of convergence. In this paper we answer the first of these two questions: We prove that the Gaver-Stehfest approximations converge for functions of bounded variation and functions satisfying an analogue of Dini criterion. 
\end{abstract}

{\vskip 0.5cm}
 \noindent {\it Keywords}: Gaver-Stehfest algorithm, inverse Laplace transform, Lambert W-function, generating functions, Dini criterion, Jordan criterion
{\vskip 0.5cm}
 \noindent {\it 2010 Mathematics Subject Classification }: 65R10, 65B05  

\newpage


\section{Introduction and main results}


In this paper we are concerned with the classical problem of numerical inversion of Laplace transform. More specifically, 
assume that $f:(0,\infty)\mapsto \r$ is a locally integrable function, such that its Laplace transform 
\beqq
F(z):=\int\limits_0^{\infty} e^{-zx} f(x) \d x 
\eeqq
is finite for all $z>0$. The problem consists in recovering the original function $f(x)$ given that we know  $F(z)$.
This problem has numerous applications, and it has attracted a lot of attention from researchers over the last fifty years
(see \cite{Cohen} for an up-to-date exposition of this area). 

More specifically, we are interested in the Gaver-Stehfest algorithm, which aims to approximate $f(x)$ by a sequence of functions
\beq\label{def_Gaver_Stehfest_apprx}
f_n(x):=\ln(2) x^{-1}
 \sum\limits_{k=1}^{2n} a_k(n) F\left(k  \ln(2) x^{-1} \right), \;\;\; n\ge 1, \; x>0,
\eeq
where the coefficients are defined as follows:
\beqq
a_k(n):=
  \frac{(-1)^{n+k}}{n!} \sum\limits_{j=[(k+1)/2]}^{\min(k,n)} j^{n+1}
{n \choose j}{2j \choose j}{j \choose k-j}, \;\;\; n\ge 1, \; 1\le k \le 2n.  
\eeqq

In order to demonstrate the intuition behind these rather complicated formulas, let us explain where they come from. 
In 1966 Gaver 
\cite{Gaver1966} has introduced the following sequence of approximations 
\beq\label{def_f_k}
\tilde f_k(x):=
\ln(2) x^{-1} \frac{(2k)!}{k! (k-1)!}
\sum\limits_{i=0}^k \binom{k}{i} (-1)^i F((k+i)\ln(2) x^{-1}), \;\;\; k\ge 1, \; x>0.
\eeq
Applying the binomial theorem, the above expression can be rewritten in an equivalent integral form
\beq\label{def_f_k2}
\tilde f_k(x)=
 \int\limits_{0}^{\infty} p_k( u) f\left(\frac{x u}{\ln(2)}\right) \d u, 
\eeq
where 
\beqq
p_k(u):=\frac{(2k)!}{k! (k-1)!} (1-e^{-u})^k e^{-ku}, \;\;\; k\ge 1, \; u\ge 0. 
\eeqq
The function $p_k(u)$ is positive and its integral over $[0,\infty)$ is equal to one, 
therefore we can think of it as the density function of a positive random variable $U_k$. 
One can check by direct calculations that  
$\e[U_k]=\ln(2) + O(k^{-1})$ and ${\textnormal{Var}}(U_k)=O(k^{-1})$ 
as $k\to +\infty$. This shows that the random variables $\{U_k\}_{k\ge 1}$ converge 
in distribution to $\ln(2)$, therefore for any continuous function $f(x)$ and any $x>0$ 
we have
\beqq
\tilde f_k(x)=\e\left[f\left(\frac{xU_k}{\ln(2)}\right)\right] \to f(x), \;\;\;  k\to \infty.
\eeqq 
Gaver \cite{Gaver1966} also proves that 
\beq\label{Gaver_asymptotic_expansion}
\tilde f_k(x)=f(x)+\frac{\beta_1(x)}{k}+\frac{\beta_2(x)}{k^2}+...+
\frac{\beta_{m-1}(x)}{k^{m-1}}+O\left(k^{-m+\frac{1}{2}}\right), 
\;\;\; k\to +\infty.
\eeq
under the assumptions $f \in C^{2m}(\r^+)$ and $f^{(j)}(x) \in L_{\infty}(\r^+)$ for all $j=0,1,\dots,2m$. 
Formula \eqref{Gaver_asymptotic_expansion} shows that $\tilde f_k(x)$ converge to $f(x)$ rather slowly, 
but it also suggests using a convergence acceleration technique. 
This is exactly what was done by Stehfest \cite{Stehfest1970,Stehfest2} in 1970. 
He defined a new approximation
\beq\label{f_n_tilde_f_n}
f_n(x)=\sum\limits_{k=1}^{n} c_k(n) \tilde f_{k}(x),
\eeq
where the coefficients $c_k(n)$ are chosen so that the asymptotic terms $b_j(x)k^{-j}$, $j=1,2,\dots,n-1$ in 
\eqref{Gaver_asymptotic_expansion} are eliminated:
\beq\label{ckn_linear_conditions}
\sum\limits_{k=1}^{n} c_k(n) k^{-j}=
\begin{cases}
1, \;\;\; j=0, \\
0, \;\;\; j=1,2,\dots,n-1.
\end{cases}
\eeq
It turns out that the above conditions are satisfied by the sequence
\beqq
c_k(n):=(-1)^{n+k}\frac{k^n }{k!(n-k)!}, \;\;\; n\ge 1, 1\le k \le n.  
\eeqq
Combining the above formula with \eqref{def_f_k} and \eqref{f_n_tilde_f_n} gives us the Gaver-Stehfest approximations  
in the form  \eqref{def_Gaver_Stehfest_apprx}.

%

The Gaver-Stehfest algorithm has a number of desirable properties: (i) the approximations $f_n(x)$ are linear in values of $F(z)$; 
(ii) it requires the values of $F(z)$ for real $z$ only does not need complex numbers at all; 
(iii) the coefficients $a_k(n)$ can be easily computed;
(iv) the Gaver-Stehfest approximations are exact for constant functions, that is, 
if $f(x)\equiv c$ then $f_n(x) \equiv c$ for all $n\ge 1$.
This algorithm was studied in  \cite{Davies1979,Jacquot1983,Masol2010,Whitt06}, where it was demonstrated numerically that $f_n(x)$ converge very quickly to $f(x)$ for many examples of initial functions $f(x)$
(provided that $f(x)$ is non-oscillating). Another universally accepted fact is that this algorithm  requires high-precision arithmetic for its implementation (which is rather obvious, since the coefficients $a_k(n)$ are growing very rapidly and have alternating signs). 
 
 Over the last forty years the Gaver-Stehfest algorithm has been applied to solving various problems in Geophysics \cite{Knight1982}, Probability 
\cite{Abate_Whitt,Kou2003}, Actuarial Mathematics \cite{Badescu2005} and Mathematical Finance \cite{Schoutens2011},
Chemistry \cite{Montella2008} and Economics \cite{Kawakatsu2005}. This is just a small sample, 
and a quick search on the Internet produces hundreds of papers with references to the Gaver-Stehfest algorithm.

Despite all this popularity, there has not yet been a single rigorous investigation of this algorithm. In particular, it is not known what are the sufficient conditions on $f$ that will ensure convergence of $f_n(x)$, and what would be the rate of convergence. Stehfest
\cite{Stehfest1970} writes that ``theoretically $f_n(x)$ become the more accurate the greater $n$", and 
Proposition 8.2 in Abate and Whitt \cite{Abate_Whitt} states that for any $k>0$ we have
$f_n(x)-f(x)=o(n^{-k})$ as $n\to +\infty$, however both of these statements are not very precise and lack rigorous proof. The authors seem to assume that the derivation of the Gaver-Stehfest approximations via formulas \eqref{def_f_k}, \eqref{Gaver_asymptotic_expansion}, \eqref{f_n_tilde_f_n} and \eqref{ckn_linear_conditions} automatically gives the proof of convergence. 
This is not the case. First of all, validity of the asymptotic expansion \eqref{Gaver_asymptotic_expansion} for all $m\ge 1$ would require a very restrictive assumption
on the function $f$ (it has to be an infinitely differentiable function plus some additional assumptions). 
 The second issue is that even if the asymptotic expansion \eqref{Gaver_asymptotic_expansion} is valid for all $m\ge 1$, 
it is still not clear that $f_n(x) \to f(x)$, since the coefficients $c_k(n)$ 
grow very rapidly and have alternating signs.
Therefore, it is not at all obvious that $f_n(x)$ will converge to $f(x)$, given \eqref{f_n_tilde_f_n}, \eqref{ckn_linear_conditions} and the fact that $\tilde f_n(x)\to f(x)$.

In this paper we present the first rigorous investigation of the Gaver-Stehfest algorithm. We derive two sufficient conditions on the function $f$, which ensure convergence of $f_n(x)$. Our main results are presented in the next theorem.

\begin{theorem}\label{thm_main}
Assume that $f:(0,\infty)\mapsto \r$ is a locally integrable function, such that the Laplace transform $F(z)=\int_0^{\infty} e^{-zx} f(x) \d x$ exists for all $z>0$, and 
that $f_n(x)$ are defined by \eqref{def_Gaver_Stehfest_apprx}.
\begin{itemize}
\item[(i)] The convergence of $f_n(x)$ depends only on the values of the function $f$ in the neighborhood of $x$. 
\item[(ii)] Assume that for some $c\in \r$ and some $\epsilon \in (0,1/4)$ 
\beq\label{Dini_condition}
\int\limits_0^{\epsilon} \big| f(-x \log_2(\tfrac{1}{2}+v))+f(-x \log_2(\tfrac{1}{2}-v))-2c\big| v^{-1} \d v < \infty.
\eeq
 Then $f_n(x) \to c$ as $n\to +\infty$.
\item[(iii)] Assume that the function $f$ has bounded variation in the neighborhood of $x$. Then \\
$f_n(x) \to (f(x+0)+f(x-0))/2$ as $n\to +\infty$. 
\end{itemize}
\end{theorem}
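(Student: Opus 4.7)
The strategy is to rewrite $f_n(x)$ as an integral against a single polynomial kernel and then reduce all three assertions to sharp asymptotic estimates on that kernel. Substituting $w=e^{-u}$ in \eqref{def_f_k2} gives $\tilde f_k(x)=\int_0^1 \beta_k(w)\, f(-x\log_2 w)\,\d w$ with $\beta_k(w)=\tfrac{(2k)!}{k!(k-1)!}(1-w)^k w^{k-1}$ (the Beta$(k,k{+}1)$ density), and then \eqref{f_n_tilde_f_n} yields
\begin{equation*}
f_n(x) = \int_0^1 K_n(w)\, f(-x\log_2 w)\,\d w, \qquad K_n(w):=\sum_{k=1}^n c_k(n)\,\beta_k(w).
\end{equation*}
The $j=0$ case of \eqref{ckn_linear_conditions} forces $\int_0^1 K_n(w)\,\d w=1$ for every $n$, and the map $w=2^{-u/x}$ sends $w=\tfrac12$ to $u=x$, so $w=\tfrac12$ is the natural concentration point of $K_n$.

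The technical heart of the proof is the asymptotic analysis of this signed polynomial. Using the generating-function identity $\sum_{k\ge 1} k\binom{2k}{k}y^k = 2y(1-4y)^{-3/2}$ together with the explicit form of $c_k(n)$, one can recast $K_n(w)$ as a Cauchy contour integral, and a saddle-point analysis produces a saddle equation whose explicit solution involves the Lambert $W$-function. From this I would extract two quantitative facts: a Gaussian-type concentration of $K_n$ on scale $n^{-1/2}$ around $w=\tfrac12$, and exponential decay of $|K_n(w)|$ for $w$ bounded away from $\tfrac12$, strong enough to dominate the growth of $|f(-x\log_2 w)|$ permitted at the endpoints $w=0,1$ by the hypothesis that the Laplace transform of $f$ exists. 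These two estimates already give assertion (i): altering $f$ outside a neighborhood of $x$ changes the integrand only on a set where $K_n$ decays exponentially, so it cannot affect the limit.

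For part (ii), after reducing to $c=0$ using $\int K_n=1$, split the integral at $|w-\tfrac12|<\epsilon$ versus $|w-\tfrac12|\ge\epsilon$; the outer part vanishes by (i). On the inner part, change variables $v=w-\tfrac12$ and combine the $v>0$ and $v<0$ halves into a symmetric piece plus an antisymmetric remainder. The saddle-point expansion yields the crucial uniform bound $|K_n(\tfrac12+v)+K_n(\tfrac12-v)|\le C/v$ on $(0,\epsilon)$; hypothesis \eqref{Dini_condition} supplies an integrable majorant, and dominated convergence, together with the pointwise decay $K_n(\tfrac12\pm v)\to 0$ for each fixed $v>0$, closes the argument. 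Part (iii) is handled in a similar spirit: decompose $f$ near $x$ into monotone summands, apply the second mean value theorem to each, and reduce to showing that the truncated masses $\int_0^v K_n(\tfrac12\pm t)\,\d t$ are uniformly bounded in $n,v$ and converge to $\tfrac12$ for each fixed small $v>0$. Both properties follow from the saddle-point estimates combined with $\int_0^1 K_n=1$ and the asymptotic symmetry of the kernel around $\tfrac12$, and the matching of the left and right kernel masses produces the mean value $(f(x{+}0)+f(x{-}0))/2$ in the limit.

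The main obstacle is unambiguously the second step. Unlike the Gaver densities $\beta_k$, Stehfest's acceleration destroys positivity: the coefficients $c_k(n)$ alternate in sign and grow super-exponentially, so no probabilistic argument can control $K_n$, and each estimate on it must be obtained by genuinely complex-analytic saddle-point asymptotics, which is also where the Lambert $W$-function enters through the saddle equation. Once those kernel bounds are in hand the remaining arguments are classical Dini-- and Jordan--style reductions.
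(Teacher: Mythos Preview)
Your proposal is essentially correct and closely parallels the paper's proof: both express $f_n(x)$ as an integral against the same polynomial kernel (your $K_n(w)$ equals $q_n(4w(1-w))/w$ in the paper's notation), establish exponential decay away from $w=\tfrac12$ to get (i), and then run the standard Dini and Jordan reductions for (ii) and (iii). The route to the kernel asymptotics differs. Rather than a saddle-point analysis, the paper discovers an explicit generating function $\sum_{n\ge 1} q_n(v)(-t)^n = G(vte^t)$ and performs singularity analysis: the dominant singularity of $G$ sits at $vte^t=-1/e$, which is exactly where the Lambert $W$-function enters, and residue/transfer arguments yield the precise asymptotic $q_n(1-4v^2)=\tfrac{\sqrt{2}}{\pi}\,|\xi(v)|^{-n}\frac{\sin(n\alpha(v))}{\alpha(v)}+O(|\xi(v)|^{-n})$ with $\xi(v)=W(-1/(e(1-4v^2)))$ and $\alpha(v)=\operatorname{Im}\xi(v)$. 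One point where your description is imprecise: the kernel is not Gaussian near $w=\tfrac12$ but Dirichlet-like with an exponential damping factor, and it is the oscillatory factor $\sin(n\alpha)/\alpha$ --- not Gaussian concentration --- that yields the uniform $C/v$ bound you invoke for (ii) and, via the sine integral $\mathrm{Si}(\cdot)$, the uniform boundedness of truncated masses for (iii). The paper's Jordan argument in fact applies the second mean-value theorem twice to reduce directly to $\mathrm{Si}$ and then lets $\epsilon\to 0$; your formulation in terms of truncated masses is equivalent, but proving their uniform boundedness still requires this oscillatory asymptotic form.
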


Our sufficient conditions are very similar to the corresponding results from the theory of convergence of Fourier series. Item (i) has its counterpart in \cite[Theorem 4.1.1]{Kawata}, item (ii) should be compared with Dini  criterion \cite[Theorem 4.1.3(i)]{Kawata}
\beq\label{Dini_test}
\int\limits_0^{\epsilon} \big| f(x+v)+f(x-v)-2c\big| v^{-1} \d v < \infty,
\eeq
and item (iii) is exactly the same as Jordan criterion \cite[Theorem 4.1.3(ii)]{Kawata}.  Theorem \eqref{thm_main}(ii) also provides the following useful corollary:
\begin{corollary}
Under the assumptions of Theorem \ref{thm_main}, if 
\beqq
f(x+v)-f(x)=O(|v|^\alpha)
\eeqq
for some $\alpha>0$ and all $v$ in some neighborhood of $x$, then 
$f_n(x) \to f(x)$ as $n\to +\infty$.
\end{corollary}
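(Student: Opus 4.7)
The plan is to deduce the corollary directly from item (ii) of Theorem~\ref{thm_main}, by taking $c=f(x)$ and verifying that the Dini-type condition \eqref{Dini_condition} is implied by the Hölder-type bound $f(x+v)-f(x)=O(|v|^\alpha)$.

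The key observation is that the arguments of $f$ appearing in \eqref{Dini_condition} are small perturbations of $x$ when $v$ is small. Indeed, since $-\log_2(1/2)=1$, one can write
\[
-x\log_2(\tfrac{1}{2}\pm v) \;=\; x - x\log_2(1\pm 2v),
\]
and expanding the logarithm near $v=0$ gives $-x\log_2(\tfrac{1}{2}\pm v) = x \mp \tfrac{2x}{\ln 2}\, v + O(v^2)$. In particular, for all sufficiently small $v>0$ the points $-x\log_2(\tfrac{1}{2}\pm v)$ lie in the neighborhood of $x$ on which the Hölder bound is assumed, and their distance from $x$ is $O(v)$.

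Applying the hypothesis to each of the two terms separately, one obtains
\[
\bigl| f(-x\log_2(\tfrac{1}{2}+v))-f(x)\bigr| = O(v^\alpha), \qquad
\bigl| f(-x\log_2(\tfrac{1}{2}-v))-f(x)\bigr| = O(v^\alpha),
\]
so that the integrand in \eqref{Dini_condition} with $c=f(x)$ is $O(v^{\alpha-1})$ as $v\to 0^+$. Since $\alpha>0$, this bound is integrable on $(0,\epsilon)$ for any sufficiently small $\epsilon\in(0,1/4)$, and the hypothesis of Theorem~\ref{thm_main}(ii) is satisfied with $c=f(x)$. The conclusion $f_n(x)\to f(x)$ then follows immediately.

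There is no real obstacle here: the only point requiring a moment of care is the change of variables $v\mapsto -x\log_2(\tfrac12\pm v)$, which is smooth and of order $v$ near $v=0$, so the Hölder estimate transfers without loss. The corollary is essentially a packaging of Theorem~\ref{thm_main}(ii) into a more user-friendly hypothesis.
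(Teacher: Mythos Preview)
Your proof is correct and follows exactly the route the paper intends: the corollary is stated there as an immediate consequence of Theorem~\ref{thm_main}(ii), and your verification that the H\"older bound transfers through the smooth change of variable $v\mapsto -x\log_2(\tfrac12\pm v)$ to give an $O(v^{\alpha-1})$ integrand is precisely what is needed to check condition~\eqref{Dini_condition} with $c=f(x)$.
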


We do not address the second important problem related to the rate of convergence of $f_n(x)$. We hope 
that the methods developed in this paper can be useful for solving this problem, and we leave it for future work.


\section{Proof of Theorem \ref{thm_main}}


Let us review some properties of the Lambert W-function (see \cite{Corless96}), which will be used in the proof of Theorem \ref{thm_main}. 
 We will be interested in the principal branch of the Lambert W-function, which will be denoted by $W(z)$): 
 It is an analytic function in the neighborhood of $z=0$
 and it satisfies $W(z)\exp(W(z))=z$.   

Let us investigate the range and domain of $W(z)$ in more detail. 
The function $w=x+\i y \mapsto we^w$ takes real values only if $y=0$ or $x=-y \cot(y)$ 
(see Section 4 in \cite{Corless96}). In particular, the open set 
\beqq
{\mathcal A}:=\{ w=x+\i y, \;\;\; x>- y \cot(y), \;\;\; -\pi < y < \pi  \}
\eeqq
is mapped by $z=we^w$ onto the cut complex plane $\c\setminus (-\infty, -1/e]$, see Figure \ref{fig1}. 
The function $w\mapsto we^w$ is one-to-one on the set ${\mathcal A}$, and $W(z)$ is defined as the inverse of this function. Therefore, $W(z)$ is analytic in $\c\setminus (-\infty, -1/e]$ and maps this set onto
${\mathcal A}$. 
It is known that $W(z)$ has the following explicit Taylor series at $z=0$ (see formula (3.1) in \cite{Corless96})
\beq\label{W_series}
W(z)=\sum\limits_{n\ge 1} (-n)^{n-1} \frac{z^n}{n!}, \;\;\; \vert z \vert<1/e,
\eeq
and has a branching singularity at $z=-1/e$
\beq\label{expansion_W_near_1_over_e}
W(z)=\sum_{n\ge 0} \mu_n p^n= -1+p-\frac{p^2}{3}+\frac{11}{72} p^3-\frac{43}{540} p^4+ \frac{769}{17280}p^5 
+..., 
\eeq
where $p=p(z):=\sqrt{2(1+ez)}$ and the series converges for $|p|<\sqrt{2}$ (see formula (4.22) in \cite{Corless96}).  

We will extend function $W(z)$ to the interval $\{ z\in \r, \; z<-1/e\}$ so that $W(z)$ is continuous 
in the upper half-plane $\im(z)\ge 0$. Thus $W(z)$ maps the interval $\{ z\in \r, \; z<-1/e\}$ 
onto the curve $x=-y\cot(y)$, $y \in (0,\pi)$ (which is the upper half of the black curve on Figure
\ref{fig1}(a)).  The function $W(z)$ is smooth on $\{ z\in \r, \; z<-1/e\}$, and it is clear that 
$|W(z)|$ and $\im(W(z))$ are decreasing function of $z$. 
Assuming that $z$ is real and $z<-1/e$, the equation $we^w=z$ has infinitely many solutions, which correspond to different branches of Lambert W-function. However, only two of these solutions
(given by $w=W(z)$ and $w={\overline{W(z)}}$) satisfy $|\im(w)|<\pi$. These solutions necessarily 
lie on the black curve on  Figure \ref{fig1}(a), and they will play a very important role in our investigation. 
\label{wew_discussion}

\begin{figure}
\centering
\subfloat[][$w=x+\i y$]{\label{fig1a}\includegraphics[height =5cm]{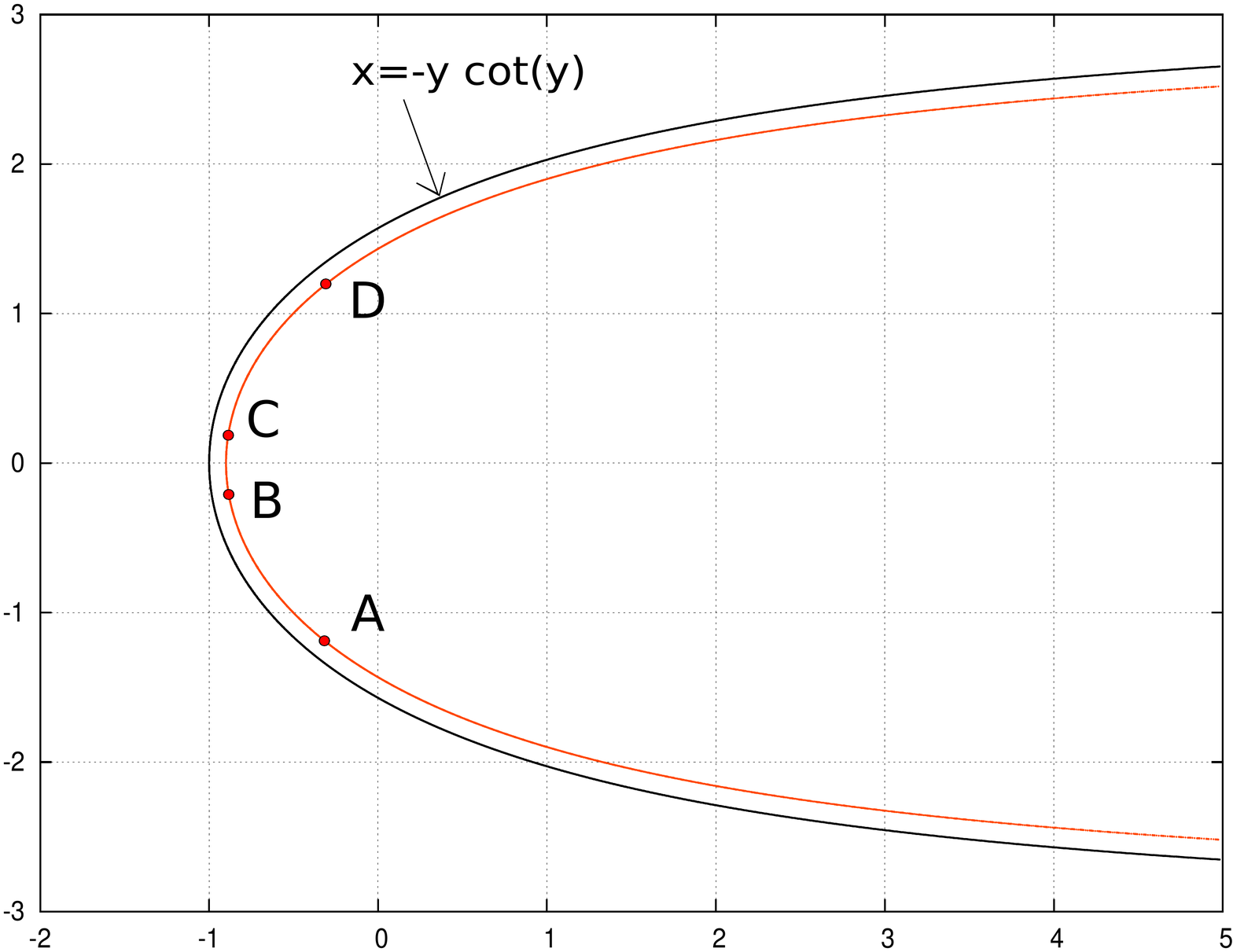}}
\subfloat[][$z=we^w$]{\label{fig1b}\includegraphics[height =5cm]{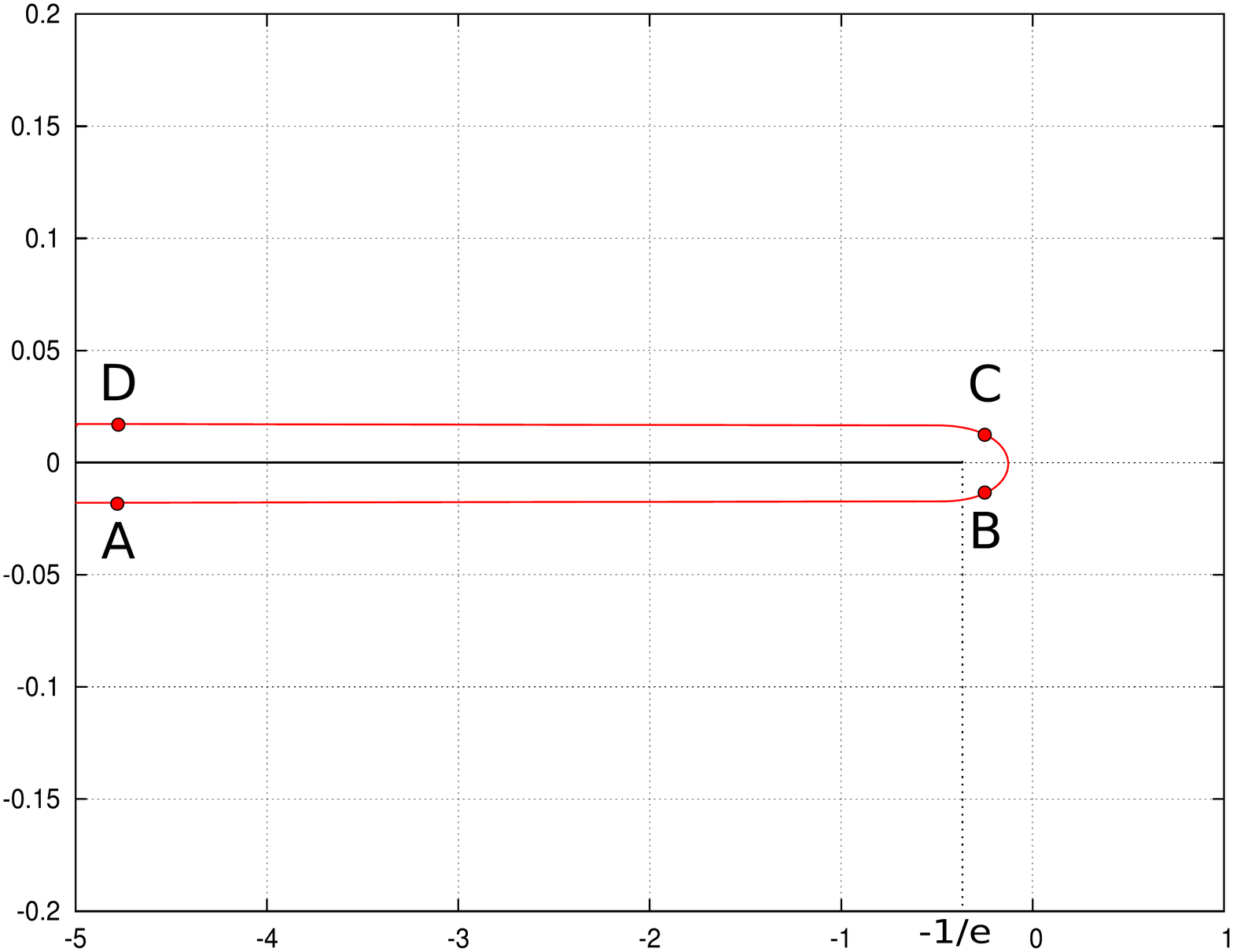}} \\
\caption{Domain (b) and range (a) of the principal branch of the Lambert W-function.}
\label{fig1}
\end{figure}

Our main tool used in the proof Theorem \ref{thm_main} is the following sequence of polynomials
\beq\label{def_qnv}
q_n(v):=\sum\limits_{k=1}^n \frac{ k^{n+1}(\tfrac{1}{2})_k}{(n-k)!(k!)^2} (-1)^{n+k} v^k, \;\;\; n\ge 1. 
\eeq
Recall that $(a)_k:=a(a+1)\dots(a+k-1)$ denotes Pochhammer symbol.   
Combining equations \eqref{def_f_k2} and \eqref{f_n_tilde_f_n} we obtain an integral representation
\beq\label{f_n_q_n_formula_1}
f_n(x)&=& \int\limits_0^{\infty}
q_n\left(4e^{-u} (1-e^{-u})\right) f\left(\frac{x u}{\ln(2)}\right) \d u,
\eeq
which explains why these polynomials are important for Gaver-Stehfest approximations. 
Our plan for proving Theorem \ref{thm_main} is to establish uniform 
asymptotics for $q_n(v)$, $0\le v \le 1$, as $n\to \infty$ and then use \eqref{f_n_q_n_formula_1} 
to study convergence of $f_n(x)$.

Let us also define 
\beq\label{def_Gz}
G(z):=\sum\limits_{n\ge 1} \frac{(\frac{1}{2})_n}{(n!)^2} (-1)^n n^{n+1} z^n. 
\eeq
Using Stirling's asymptotic formula for the Gamma function 
one can check that the above series converges for 
$|z|<1/e$.

\begin{proposition}\label{prop_generating_function_qn}
For $0\le v \le 1$ and $|t|<1/(2e)$ 
\beq\label{eqn_generating_function_qn}
G\left(vt e^t \right)=\sum\limits_{n\ge 1} q_n(v) (-1)^n t^n.
\eeq
\end{proposition}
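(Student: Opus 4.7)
The plan is to start from the right-hand side of \eqref{eqn_generating_function_qn}, substitute the explicit formula \eqref{def_qnv} for $q_n(v)$, exchange the order of summation, evaluate the resulting inner sum in closed form, and recognise the result as $G(vte^t)$.

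Concretely, inserting \eqref{def_qnv} and using $(-1)^n(-1)^{n+k}=(-1)^k$ turns the right-hand side into a double sum over $n\ge 1$ and $1\le k\le n$ in which the dependence on $n$ inside the summand is just $k^{n+1}t^n/(n-k)!$. After swapping so that $k$ is the outer index and reindexing the inner sum by $n=k+m$, the inner series collapses to
$\sum_{m\ge 0} k^{k+m+1}t^{k+m}/m! = k^{k+1}(te^t)^k$. What remains outside is $(-1)^k(\tfrac{1}{2})_k v^k/(k!)^2$, so the whole expression becomes
$\sum_{k\ge 1}\frac{(\tfrac{1}{2})_k}{(k!)^2}(-1)^k k^{k+1}(vte^t)^k$, which matches the definition \eqref{def_Gz} of $G$ evaluated at $vte^t$.

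The one point that requires care is justifying the interchange of summations. For this I would bound the double series term-wise by its absolute values and run the same algebra with $|t|$ in place of $t$: the majorant reduces to $\sum_{k\ge 1}\frac{(\tfrac{1}{2})_k}{(k!)^2} k^{k+1}(|t|e^{|t|})^k$, which is the defining series of $G$ evaluated at the nonnegative argument $|t|e^{|t|}$. It is thus enough to check that $|t|e^{|t|}<1/e$ whenever $|t|<1/(2e)$, and this reduces to the elementary inequality $e^{1/(2e)}<2$ (equivalently $1/(2e)<\ln 2$, which is immediate since $2e\ln 2\approx 3.77>1$). Once this estimate is in place, Fubini's theorem legitimises the rearrangement of the double sum and the identity \eqref{eqn_generating_function_qn} follows. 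I do not anticipate any genuine obstacle here: the whole argument is a clean power-series manipulation, and the only non-formal step is the short numerical check placing $vte^t$ inside the disc $|z|<1/e$ on which the series \eqref{def_Gz} for $G$ converges.
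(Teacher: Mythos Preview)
Your argument is correct and follows essentially the same route as the paper: substitute \eqref{def_qnv}, interchange the order of summation, sum the inner exponential series, and recognise $G$ evaluated at $vte^t$. The only cosmetic difference is in the justification of absolute convergence---the paper first obtains the crude bound $|q_n(v)|\le n^{n+1}2^n/n!$ and then invokes it, whereas you appeal directly to Tonelli on the nonnegative double series and check $|t|e^{|t|}<1/e$; both lead to the same conclusion.
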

\begin{proof}
Using \eqref{def_qnv}, the trivial estimates $(\tfrac{1}{2})_k<(1)_k=k!$ and 
$k^{n+1}<n^{n+1}$ and the binomial theorem, we obtain an upper bound 
\beqq
|q_n(v)| \le n^{n+1} \sum\limits_{k=1}^n \frac{1}{(n-k)!(k!)} <\frac{n^{n+1}2^n}{n!},
 \;\;\; {\textnormal{ for all }} \;  0\le v \le 1, 
\eeqq
which implies that the series in the left-hand side of \eqref{eqn_generating_function_qn} converges absolutely for $|t|<1/(2e)$. We combine \eqref{def_qnv} and \eqref{eqn_generating_function_qn}, interchange the order of summation and obtain
\beqq
\sum\limits_{n\ge 1} q_n(v) (-1)^n t^n&=&\sum\limits_{n\ge 1}  \sum\limits_{k=1}^n 
\frac{ k^{n+1}(\tfrac{1}{2})_k}{(n-k)!(k!)^2} (-1)^{k} v^k t^n = 
\sum\limits_{k\ge 1}\frac{(\tfrac{1}{2})_k}{(k!)^2}  (-1)^{k} k^{k+1} (vt)^k 
\sum\limits_{n\ge k} \frac{k^{n-k}t^{n-k}}{(n-k)!}\\
&=&\sum\limits_{k\ge 1}\frac{(\tfrac{1}{2})_k}{(k!)^2}  (-1)^{k} k^{k+1} (vte^t)^k =G(vte^t). 
\eeqq
\end{proof}

It is well known that the asymptotic behavior of the coefficients of the power series is closely related with the asymptotic behavior of the generating function at its dominant singularities. 
Our next goal is to obtain analytic continuation of the function $G(z)$ and to study its asymptotic expansion at the dominant singularity. 
\begin{proposition}\label{prop_analytic_continuation}
There exists a function $g(z)$, which is analytic in $\c\setminus (-\infty,-1/e]$
and continuous in the half-plane $\im(z) \ge 0$, such that
\beq\label{Gz_asymptotics}
G(z)=\frac{1}{ \sqrt{2}\pi} \left[ (1+ez)^{-1}
+ \frac{5}{24}\ln(1+ez)\right]+g(z).
\eeq
\end{proposition}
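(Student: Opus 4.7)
The plan is to derive an integral representation for $G(z)$ in terms of the Lambert $W$-function, and then exploit the local expansion of $W$ at $-1/e$ to isolate the singular part of $G$.

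First I would use the beta-integral identity $(\tfrac{1}{2})_n/n! = \binom{2n}{n}/4^n = (2/\pi)\int_0^{\pi/2}\sin^{2n}\theta\,\d\theta$ and interchange summation and integration (justified by absolute convergence for $|z|<1/e$) to obtain
\beqq
G(z) = \frac{1}{\pi}\int_0^1 \frac{H(zs)}{\sqrt{s(1-s)}}\d s,  \qquad H(w) := \sum_{n\ge 1}(-1)^n\frac{n^{n+1}}{n!}w^n.
\eeqq
Starting from \eqref{W_series} and using the consequence $wW'(w)=W(w)/(1+W(w))$ of the functional equation $W(w)e^{W(w)}=w$, a short calculation yields the closed form $H(w) = -W(w)/(1+W(w))^3$. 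Since $W$ is analytic on $\c\setminus(-\infty,-1/e]$ and continuous from the upper half-plane (as reviewed before \eqref{def_qnv}), and since for any $z\in\c\setminus(-\infty,-1/e]$ the segment $\{zs:s\in[0,1]\}$ does not meet $-1/e$, the integral representation extends $G$ to an analytic function on $\c\setminus(-\infty,-1/e]$ which is continuous on $\{\im z\ge 0\}\setminus\{-1/e\}$.

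To extract the singular behaviour at $z=-1/e$, I set $p(w):=\sqrt{2(1+ew)}$; then \eqref{expansion_W_near_1_over_e} gives $1+W(w) = p(w)\Psi(p(w))$ with $\Psi$ analytic at $0$ and $\Psi(0)=1$, and a straightforward algebraic manipulation yields
\beqq
H(w) = \frac{1}{p(w)^3} - \frac{11}{24\,p(w)} + H_0(w),
\eeqq
where $H_0$ is a power series in $p(w)$ convergent near $p=0$. The two singular pieces of the integral reduce, via $s = \sin^2\theta$, to classical complete elliptic integrals: setting $k=\sqrt{-ez}$ and $k'=\sqrt{1+ez}$,
\beqq
\frac{1}{\pi}\int_0^1 \frac{\d s}{p(zs)^3\sqrt{s(1-s)}} = \frac{E(k)}{\sqrt{2}\,\pi(1+ez)}, \qquad \frac{1}{\pi}\int_0^1 \frac{\d s}{p(zs)\sqrt{s(1-s)}} = \frac{\sqrt{2}\,K(k)}{\pi}.
\eeqq
The standard expansions $K(k) = \ln(4/k')+O(k'^2\log k')$ and $E(k) = 1+(k'^2/2)[\ln(4/k')-\tfrac12]+O(k'^4\log k')$ then show that the $p^{-3}$-piece contributes the pole $\tfrac{1}{\sqrt{2}\pi(1+ez)}$ together with $-\tfrac{1}{4\sqrt{2}\pi}\ln(1+ez)$, while the $-11/(24p)$-piece contributes $+\tfrac{11}{24\sqrt{2}\pi}\ln(1+ez)$; the sum produces exactly the coefficient $\tfrac{5}{24\sqrt{2}\pi}$ of $\ln(1+ez)$ via the arithmetic $-\tfrac14+\tfrac{11}{24}=\tfrac{5}{24}$. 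One then \emph{defines} $g(z):=G(z)-\tfrac{1}{\sqrt{2}\pi}[(1+ez)^{-1}+\tfrac{5}{24}\ln(1+ez)]$; this $g$ is analytic on $\c\setminus(-\infty,-1/e]$ by construction, and the cancellation of singularities just verified forces $g$ to extend continuously to $z=-1/e$ from $\{\im z\ge 0\}$.

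The principal technical obstacle is controlling the remainder $\tfrac{1}{\pi}\int_0^1 H_0(zs)[s(1-s)]^{-1/2}\d s$. Because $H_0$ is analytic in the variable $p$ rather than in $w$, its odd-order terms $p^{2\ell+1}$ produce, after integration, quantities such as $(1+ez)^{\ell+1}\ln(1+ez)$ that are continuous but not smooth at $z=-1/e$. One must verify, by a uniform estimate on $H_0(zs)$ for $s\in[0,1]$ and $z$ in a neighbourhood of $-1/e$ (together with a dominated-convergence argument), that these contributions are themselves continuous on $\{\im z\ge 0\}$ and analytic on $\c\setminus(-\infty,-1/e]$, so that they can be absorbed into $g(z)$ without spoiling its stated properties.
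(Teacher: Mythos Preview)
Your proposal is correct and takes essentially the same approach as the paper: the same beta-integral representation of $G$ in terms of $H$, the same decomposition $H=p^{-3}-\tfrac{11}{24}p^{-1}+(\text{regular})$ coming from \eqref{expansion_W_near_1_over_e}, and the same arithmetic $-\tfrac14+\tfrac{11}{24}=\tfrac{5}{24}$ for the logarithmic coefficient. The only cosmetic differences are that the paper writes $H=-(z\tfrac{\d}{\d z})^2 W$ rather than your equivalent closed form $-W/(1+W)^3$, and evaluates the two singular integrals via the ${}_2F_1$ connection formulas at the argument $1$ rather than via the $k\to 1$ asymptotics of the complete elliptic integrals $E(k)$ and $K(k)$---which is the same computation in different notation.
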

\begin{proof}
Let us denote $H(z):=-\left(z\frac{\d}{\d z} \right)^2 W(z)$.
From \eqref{W_series} we find that 
\beqq
H(z)=\sum\limits_{n\ge 1} \frac{n^{n+1}}{n!}(-1)^n z^n, \;\;\; |z|<1/e. 
\eeqq
Using the following identity (see formula 3.621.3 in \cite{Jeffrey2007}) 
\beqq
\frac{2}{\pi}\int\limits_{0}^{\frac{\pi}{2}} (\sin(t))^{2n} \d t=\frac{(\frac{1}{2})_n}{n!} 
\eeqq
and formula \eqref{def_Gz} we obtain the integral representation
\beq\label{f_F_integral}
G(z)=\frac{2}{\pi} \int\limits_0^{\frac{\pi}{2}} H(z\sin(t)^2) \d t, \;\;\; |z|<1/e. 
\eeq
The function $W(z)$ is analytic in the cut plane $\c \setminus (-\infty,-1/e]$, therefore $H(z)$ is analytic in the same domain. Applying Leibniz rule to \eqref{f_F_integral} we see that $G(z)$ is also analytic in $\c \setminus (-\infty,-1/e]$.

From \eqref{expansion_W_near_1_over_e} and the above definition of $H(z)$ we obtain 
\beq\label{Hz_series_expansion}
H(z)=\sum\limits_{n\ge -3} c_n p(z)^n=p(z)^{-3}-
\frac{11}{24} p(z)^{-1}-
\frac{4}{135}-\frac{p(z)}{1152}
+...,
\eeq
where  $p(z):=\sqrt{2(1+ez)}$ and the above series converges for $|p(z)|<\sqrt{2}$.  We subtract the dominant 
asymptotic terms from $H(z)$ and define the two functions
\beq\label{def_hz_gz}
 h(z):=H(z)-p(z)^{-3}+
\frac{11}{24} p(z)^{-1}=\sum\limits_{n\ge 0} c_n p(z)^n, \;\;\;
g_1(z):=\frac{2}{\pi} \int\limits_0^{\frac{\pi}{2}} h(z\sin(t)^2) \d t.
\eeq
It is clear that $h(z)$ is analytic in $\c \setminus (-\infty,-1/e]$ and continuous in the upper half-plane 
$\im(z)\ge 0$, therefore $g_1(z)$ satisfies the same properties.

Next, we use formula 3.681.1 in \cite{Jeffrey2007} and formula (14) on page 110 in \cite{Erdelyi1953} and conclude that for all $z\in \c \setminus (-\infty,-1/e]$
\beq\label{2F1_identities_a}
\frac{2}{\pi}\int\limits_{0}^{\frac{\pi}2} (1+ez\sin(t)^2)^{-\frac{3}{2}} \d t&=&
{}_2F_1(\tfrac{3}{2},\tfrac{1}{2};1;-ez)=\frac{2}{\pi(1+ez)}-\frac{1}{2\pi} \ln(1+ez)+g_2(z), \\
\label{2F1_identities_b}
\frac{2}{\pi}\int\limits_{0}^{\frac{\pi}2} (1+ez\sin(t)^2)^{-\frac{1}{2}} \d t&=&
{}_2F_1(\tfrac{1}{2},\tfrac{1}{2};1;-ez)=-\frac{1}{\pi} \ln(1+ez)+g_3(z),
\eeq 
where ${}_2F_1(a,b;c;z)$ denotes the hypergeometric function and both functions $g_2(z)$ and $g_3(z)$ 
 are analytic in $\c \setminus (-\infty,-1/e]$ and continuous in the half-plane 
$\im(z)\ge 0$.
Combining \eqref{f_F_integral}, \eqref{def_hz_gz}, \eqref{2F1_identities_a} and \eqref{2F1_identities_b} we see that
\beqq
G(z)&=&\frac{2}{\pi} \int\limits_0^{\frac{\pi}{2}} H(z\sin(t)^2) \d t=
\frac{2}{\pi}\int\limits_{0}^{\frac{\pi}2} p(z\sin(t)^2)^{-3} \d t-
\frac{11}{24}\times \frac{2}{\pi}\int\limits_{0}^{\frac{\pi}2} p(z\sin(t)^2)^{-1} \d t+
\frac{2}{\pi} \int\limits_0^{\frac{\pi}{2}} h(z\sin(t)^2) \d t\\
&=&
\frac{1}{\sqrt{2}\pi(1+ez)}+\frac{5}{24\sqrt{2} \pi} \ln(1+ez)+ \left[ g_1(z)+g_2(z)+g_3(z)\right],
\eeqq
which is equivalent to \eqref{Gz_asymptotics}. 
\end{proof}

\begin{lemma}\label{lemma_estimate_q_n_over_v}
For any $\epsilon \in (0,1)$ there exist constants $b=b(\epsilon)>1$ and $C=C(\epsilon)>0$ such that \\ $|q_n(v)| < C b^{-n} v$
for all $v \in [0,1-\epsilon]$. 
\end{lemma}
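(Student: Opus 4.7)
The plan is to extract $q_n(v)$ from the generating function identity of Proposition~\ref{prop_generating_function_qn} via Cauchy's formula applied to a circular contour of radius strictly larger than $1$, which immediately yields the exponential decay. Since the series $\sum q_n(v)(-1)^n t^n=G(vte^t)$ converges a priori only for $|t|<1/(2e)$, the key point is to analytically continue $t\mapsto G(vte^t)$ to a disk of radius $r>1$ independent of $v\in[0,1-\epsilon]$, and then use the behavior of $G$ near the origin to pull out the factor of $v$.

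By Proposition~\ref{prop_analytic_continuation}, $G$ is analytic on $\c\setminus(-\infty,-1/e]$, so I need to find $r=r(\epsilon)>1$ such that $vte^t\notin(-\infty,-1/e]$ whenever $|t|<r$ and $v\in[0,1-\epsilon]$. If $vte^t=-s$ with $s\ge 1/e$, then $te^t=-s/v\le-1/(e(1-\epsilon))<-1/e$, so $te^t$ is real and strictly less than $-1/e$. By the description on page~\pageref{wew_discussion} of the points where $te^t$ is real, $t$ must be real or lie on one of the curves $x=-y\cot y$, $y\in(k\pi,(k+1)\pi)$, $k\in\zz$. On $\r$, $te^t\ge-1/e$, so no solution. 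For $|k|\ge 1$ the trivial estimate $|t|=|y|/|\sin y|\ge\pi$ suffices. On the principal curve ($k=0$, $y\in(0,\pi)$, and analogously $k=-1$) a direct computation gives $|t|=y/\sin y$ and $te^t=-(y/\sin y)e^{-y\cot y}$; both quantities are strictly increasing on $(0,\pi)$, mapping respectively onto $(1,\infty)$ and $(-\infty,-1/e)$. Hence the smallest admissible $|t|$ is $r(\epsilon):=y^\ast/\sin y^\ast$, where $y^\ast\in(0,\pi)$ solves $(y^\ast/\sin y^\ast)e^{-y^\ast\cot y^\ast}=1/(e(1-\epsilon))$; since $\epsilon>0$ forces $y^\ast>0$, this value is strictly greater than $1$.

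Fixing $r'\in(1,r(\epsilon))$ and applying Cauchy's formula then gives
\beqq
q_n(v)=\frac{(-1)^n}{2\pi\i}\oint_{|t|=r'}\frac{G(vte^t)}{t^{n+1}}\,\d t.
\eeqq
To extract the factor of $v$, observe that $G(z)/z$ has a removable singularity at the origin (from \eqref{def_Gz}, $G(z)=-\tfrac12 z+O(z^2)$) and is therefore analytic on $\c\setminus(-\infty,-1/e]$. The image $\{vte^t:v\in[0,1-\epsilon],|t|\le r'\}$ is a compact subset of this domain, hence $|G(z)/z|\le M$ there for some $M=M(\epsilon)$. Consequently $|G(vte^t)|\le Mv|t|e^{|t|}\le Mvr'e^{r'}$ on the contour, and the ML-estimate produces $|q_n(v)|\le Mr'e^{r'}v(r')^{-n}=Cb^{-n}v$ with $b=r'>1$ and $C=Mr'e^{r'}$, as required.

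The main obstacle is the geometric argument in the second paragraph, namely the assertion $r(\epsilon)>1$. It rests on the explicit parametrization of the curves $\{x=-y\cot y\}$ inherited from the earlier discussion of the Lambert W-function, together with the strict monotonicity of $y\mapsto y/\sin y$ and $y\mapsto (y/\sin y)e^{-y\cot y}$ on $(0,\pi)$ (both of which follow from elementary calculus but require a short verification). Once this is established, the rest of the argument, which relies on the analyticity of $G(z)/z$ at the origin and the ML-estimate, is routine.
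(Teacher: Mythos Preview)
Your argument is correct; the approach differs from the paper's in one notable respect. Both proofs establish the same key analytic fact: that $t\mapsto G(vte^t)$ extends holomorphically to a disk $|t|<r$ with $r>1$, uniformly in $v\in[0,1-\epsilon]$. The paper observes this by noting that $t\mapsto te^t$ sends the open unit disk into $\c\setminus(-\infty,-1/e]$ and then appeals implicitly to compactness after scaling by $1-\epsilon$; your parametrization via $y\mapsto y/\sin y$ and $y\mapsto (y/\sin y)e^{-y\cot y}$ makes the same point explicitly and even identifies the optimal radius. (A small caveat: for $\epsilon$ close to $1$ your $r(\epsilon)=y^\ast/\sin y^\ast$ may exceed $\pi$, in which case the branches $|k|\ge1$ could intrude; the correct radius is $\min(r(\epsilon),\pi)$, but this is still $>1$ and the argument is unaffected.)

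The genuine divergence is in how the factor of $v$ is extracted. The paper differentiates the generating function identity in $v$, obtains $q_n'(v)$ as the $n$-th Taylor coefficient of $G'(vte^t)te^t$, bounds $|q_n'(v)|\le Cb^{-n}$ via Cauchy estimates and the maximum modulus principle, and then integrates from $0$ using $q_n(0)=0$. You instead exploit the removable singularity of $G(z)/z$ at the origin to bound $|G(vte^t)|\le Mv|te^t|$ directly on the contour, so the factor of $v$ falls out of the ML-estimate without any differentiation. Your route is slightly more economical; the paper's route has the minor advantage that it yields a bound on $q_n'$ as well, though this is not used elsewhere.
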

\begin{proof}
The function $te^t$ maps the unit circle $|t|<1$ into some open domain which is a subset of $\c \setminus (-\infty,-1/e]$ (see Figure \ref{fig1}). 
Using this fact combined with the results of Proposition \ref{prop_analytic_continuation}, we see 
that the function $t\mapsto G(te^t)$ is analytic for $|t|<1$, therefore 
there exists $R>1$ such that $t\mapsto G((1-\epsilon) te^t)$ is analytic for $|t|<R$. It is clear that for 
all $v \in [0,1-\epsilon]$ the function $t\mapsto G'(vte^t)te^t$ is also analytic for $|t|<R$. 

We differentiate both sides of \eqref{eqn_generating_function_qn} with respect to $v$ and find 
\beqq
q_n'(v)=\frac{(-1)^n}{n!} \left[ \frac{\d^n}{\d t^n}  \left(G'(vte^t) te^t \right)  \bigg|_{t=0} \; \right]. 
\eeqq
We set $b=(1+R)/2$ and use Cauchy estimates (see \cite[Theorem 2.14]{Conway})
\beqq
 \Bigg| \frac{\d^n}{\d t^n}  \left(G'(vte^t) te^t \right)  \bigg|_{t=0} \Bigg| \le n! b^{-n} M_v
\eeqq
where $M_v:=\max\{ |G'(vte^t) te^t| \; : \; |t|=b \}$. Applying the maximum modulus principle shows that
for all $v\in [0,1-\epsilon]$
\beqq
M_v &\le& \max\{ |G'(vte^t)| \; : \; |t|=b , 0\le v\le 1-\epsilon \} \times \max\{ |te^t| \; : \; |t|=b \}\\
&=&\max\{ |G'((1-\epsilon)te^t)| \; : \; |t|=b \} \times be^b=C. 
\eeqq
So far we have established that there exist $b>1$ and $C>0$ such that
$|q_n'(v)|<C b^{-n}$ for all $v \in [0,1-\epsilon]$. Since $q_n(0)=0$, we conclude that 
$|q_n(v)| \le  \int_0^v |q_n'(v)| \d v  < C b^{-n} v$
for all $v\in [0,1-\epsilon]$. 
\end{proof}

From now on, we will use the notation $w=w(v):=W(-1/(ev))$ where $v\in (0,1]$ and $W$ is principal branch of the Lambert W-function. In other words, $z=w$ is the unique solution of the equation $1+vze^{1+z}=0$ in the strip $0\le \im(z)<\pi$
(see the discussion on page \pageref{wew_discussion}, preceding
formula \eqref{def_qnv}). Note that $w(1)=-1$ and both $\im(w(v))$ and $|w(v)|$ are decreasing functions of $v \in (0,1]$ (see Figure \ref{fig1}). 

\begin{proposition}\label{asymptotics_qn}
As $n\to +\infty$ we have uniformly for all $v\in [1/2,1)$
\beq\label{qn_asymptotics}
 q_n(v)=(-1)^n \frac{\sqrt{2}}{\pi} \re \left[ \frac{w^{-n}}{(1+w)} \right]+O(w^{-n}).
\eeq
\end{proposition}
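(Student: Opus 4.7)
The plan is to apply Cauchy's formula to the generating function identity in Proposition \ref{prop_generating_function_qn} and carry out a singularity analysis in the $t$-plane. Setting $\phi(t):=vte^t$, for any sufficiently small $r>0$,
$$(-1)^n q_n(v)=\frac{1}{2\pi \i}\oint_{|t|=r}\frac{G(\phi(t))}{t^{n+1}}\,\d t,$$
and by Proposition \ref{prop_analytic_continuation} I would split $G=G_1+G_2+g$ with $G_1(z)=1/(\sqrt{2}\pi(1+ez))$, $G_2(z)=\frac{5}{24\sqrt{2}\pi}\ln(1+ez)$, and $g$ the regular remainder. The singularities of $G\circ\phi$ lie on the preimage under $\phi$ of the branch cut $(-\infty,-1/e]$; by the discussion on page \pageref{wew_discussion}, the only solutions of $1+e\phi(t)=0$ in the strip $|\im(t)|<\pi$ are $t=w$ and $t=\overline{w}$, and branch curves of $G\circ\phi$ emanate from these two points.

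I would then deform the small circle $|t|=r$ into a contour consisting of a large circle $|t|=R$, for a fixed $R$ satisfying $|w(v)|<R<R_0$ with $R_0$ chosen uniformly in $v\in[1/2,1)$ and strictly smaller than the modulus of any secondary zero of $1+e\phi(t)$, together with two keyhole detours around the branch curves starting at $w$ and $\overline{w}$. For $G_1$ the deformation simply picks up residues at the simple poles $t=w,\overline{w}$: using $\phi'(w)=v(1+w)e^w=-(1+w)/w$ (which follows from $we^w=-1/(ev)$), one computes
$$\mathop{\textnormal{Res}}_{t=w}\frac{1}{\sqrt{2}\pi(1+e\phi(t))\,t^{n+1}}=-\frac{1}{\sqrt{2}\pi(1+w)w^n},$$
and combining with the conjugate residue reproduces the claimed main term $\frac{\sqrt{2}}{\pi}\re[w^{-n}/(1+w)]$, while the integral over $|t|=R$ is $O(R^{-n})$, an exponentially smaller error than $|w|^{-n}$. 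For $G_2$, the logarithm jumps by $2\pi\i$ across each branch curve, so the keyhole integrals reduce to $\int_{\text{cut}}t^{-n-1}\d t$, which is $O(|w|^{-n}/n)$. For $g$, the expansion in powers of $p(z)=\sqrt{2(1+ez)}$ implicit in \eqref{Hz_series_expansion} and \eqref{def_hz_gz} shows that $g\circ\phi$ has only a $\sqrt{t-w}$-type singularity at $w$, yielding a contribution of order $O(|w|^{-n}/n^{3/2})$ by the standard singularity-analysis transfer theorem.

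The main obstacle I anticipate is securing uniformity as $v\to 1^-$, where $w\to-1$ and the two singularities $w,\overline{w}$ collide on the real axis. Two verifications are required: first, the outer radius $R$ and the constant $R_0$ must be chosen independently of $v\in[1/2,1)$, which reduces to showing that the secondary zeros of $1+e\phi(t)=0$ stay uniformly bounded away from some neighborhood of the unit disk; second, the keyhole detours around the two branch curves, which merge as $v\to 1$, must be deformed into a single controlled detour so that the implicit constants in the $O(|w|^{-n}/n)$ and $O(|w|^{-n}/n^{3/2})$ bounds do not blow up. Once this uniformity is established, summing the three contributions yields $q_n(v)=(-1)^n\frac{\sqrt{2}}{\pi}\re[w^{-n}/(1+w)]+O(|w|^{-n})$ uniformly on $[1/2,1)$, as claimed.
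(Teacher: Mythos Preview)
Your strategy coincides with the paper's: Cauchy's formula applied to the generating function of Proposition~\ref{prop_generating_function_qn}, followed by the decomposition $G=G_1+G_2+g$ from Proposition~\ref{prop_analytic_continuation} and a contour deformation picking up the singularities at $t=w,\overline{w}$. Your residue computation for $G_1$ and your keyhole estimate for $G_2$ match the paper's treatment (the paper factors out $\ln(1-z/w)+\ln(1-z/\overline{w})$ explicitly rather than using a keyhole, but this is equivalent).

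Where the paper differs is in the handling of $g$ and of the uniformity issue you flag. Your claim that $g\circ\phi$ has a $\sqrt{t-w}$-type singularity is not quite right: the remainders $g_2,g_3$ coming from the hypergeometric functions in \eqref{2F1_identities_a}--\eqref{2F1_identities_b} carry higher-order logarithmic terms of the form $(1+ez)^k\ln(1+ez)$, not square roots, and Proposition~\ref{prop_analytic_continuation} only asserts continuity of $g$ in $\{\im(z)\ge 0\}$, nothing finer. The paper sidesteps this entirely: since $g$ is merely continuous at $-1/e$, one pushes the contour for $q_{n,3}$ to radius exactly $|w|$ and bounds the integral by $|w|^{-n}$ times $\max\{|g(vze^z)|:\tfrac12\le v\le 1,\ |z|=|w(v)|\}$, which is finite by compactness and continuity. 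This gives $O(|w|^{-n})$ directly with a constant manifestly uniform in $v\in[1/2,1]$. For $G_1$ and $G_2$ the paper deforms to the \emph{fixed} circle $|t|=3$, using that for all $v\in[1/2,1]$ the only roots of $1+vze^{1+z}$ inside $|z|\le\pi$ are $w,\overline{w}$ and satisfy $|w|\le|w(1/2)|<2$; thus the secondary-singularity issue you mention never arises, and no merging-keyhole argument is needed. In short, your anticipated obstacle at $v\to 1^-$ disappears once you trade the fine singularity analysis of $g$ for the cruder continuity bound on the circle $|t|=|w|$.
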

\begin{proof}
As was noted in the proof of Lemma \ref{lemma_estimate_q_n_over_v},
for any $v\in [0,1]$ the function $z\mapsto G(vze^z)$ is analytic in the circle $|z|<1$. 
Therefore,  using Cauchy integral formula and \eqref{eqn_generating_function_qn} we find that for all $v \in [1/2,1)$ 
\beq\label{q_n_integral_form}
q_n(v)=\frac{(-1)^n}{2\pi \i} \int\limits_{U_{1/2}} G(vze^z)z^{-n-1} \d z
\eeq
where the contour of integration $U_r$ denotes the circle of 
radius $r$ and center at $z=0$, winding counterclockwise around the origin. 
 
Let $g(z)$ be the function defined by \eqref{Gz_asymptotics}.  
From  \eqref{Gz_asymptotics} and \eqref{q_n_integral_form}  we find 
\beq\label{qn123}
q_n(v)=(-1)^n \left[ \frac{1}{\sqrt{2} \pi} q_{n,1}(v)+
\frac{5}{24\sqrt{2} \pi} q_{n,2}(v)+q_{n,3}(v) \right]
\eeq
 where we have defined
\beqq
q_{n,1}(v)&:=&\frac{1}{2  \pi \i} \int\limits_{U_{1/2}} \frac{z^{-n-1}}{1+vze^{1+z}} \d z,\\
q_{n,2}(v)&:=&\frac{1}{2\pi \i} \int\limits_{U_{1/2}}  \ln(1+vze^{1+z}) z^{-n-1} \d z, \\
q_{n,3}(v)&:=&\frac{1}{2\pi \i} \int\limits_{U_{1/2}} g(vze^z)z^{-n-1} \d z.
\eeqq

Our first goal is to prove that for all $v \in [1/2,1)$ we have 
\beq\label{formula_qn1}
q_{n,1}(v)=2 \re \left[ \frac{w^{-n}}{(1+w)} \right]+O(w^{-n}), \;\;\; n\to +\infty,
\eeq
and that the implied constant in $O(w^{-n})$ does not depend on $v$. 
One can check numerically that $|w(1/2)|=|W(-2/e)|\approx 1.2508<2$. In particular, for all $v \in [1/2,1)$ 
we have $1<|w(v)|<2$ and the equation $1+vze^{1+z}=0$ has exactly two solutions 
in the circle $U_{2}$, given by $z=w$ and $z=\bar w$, and no other solutions in the circle
$U_{\pi}$. This shows that for $v\in [1/2,1)$ the 
meromorphic function  $z\mapsto z^{-n-1}/(1+vze^{1+z})$ has two simple poles at $z=w$ and $z=\bar w$ and 
no other singularities in the circle $U_{\pi}$, while if $v=1$ we have a double pole at $z=w=\bar w=-1$.
We assume that $v\in [1/2,1)$, compute the residues of $z^{-n-1}/(1+vze^{1+z})$ at $z=w$ and $z=\bar w$ and applying Cauchy's residue theorem we obtain 
\beqq
\frac{1}{2\pi \i} \int\limits_{U_{1/2}} \frac{z^{-n-1}}{1+vze^{1+z}} \d z
=2 \re \left[ \frac{w^{-n}}{(1+w)} \right]+
\frac{1}{2\pi \i} \int\limits_{U_{3}} \frac{z^{-n-1}}{1+vze^{1+z}} \d z
\eeqq
The integral in the right-hand side of the above formula is estimated as follows
\beqq
\bigg | \frac{1}{2\pi \i} \int\limits_{U_{\pi}} \frac{z^{-n-1}}{1+vze^{1+z}} \d z \bigg|
\le \frac{3^{-n}}{C_1} =O(w^{-n}),
\eeqq
where $C_1:=\min\{ |1+vze^{1+z}| \; : \; 1/2\le v<1, |z|=3 \}$. Note that $C_1$ is strictly positive, since
we have shown above that for $v \in [1/2,1]$ the function $z\mapsto 1+vze^{1+z}$ has no roots in the domain $2\le|z|\le \pi$.

Our second goal is to prove that  for all $v \in [1/2,1)$ we have 
\beq\label{formula_qn2}
q_{n,2}(v)=O(w^{-n}), \;\;\; n\to +\infty. 
\eeq
We isolate the dominant singularities at $z=w$ and $z=\bar w$ and obtain
\beqq
q_n^{(2)}(v)&=&\frac{1}{2\pi \i} \int\limits_{U_{1/2}} z^{-n-1} \ln(1-z/w) \d z+
\frac{1}{2\pi \i} \int\limits_{U_{1/2}} z^{-n-1} \ln(1-z/\bar w) \d z\\
&+&
\frac{1}{2\pi \i} \int\limits_{U_{1/2}} z^{-n-1} \ln\left[\frac{1+vze^{1+z}}{(1-z/w)(1-z/\bar w)}\right] \d z.
\eeqq
The first and the second integrals can be evaluated explicitly to $(-n^{-1}w^{-n})$ and $(-n^{-1}\bar w^{-n})$ respectively, thus they contribute $O(w^{-n})$. 
The integrand in the third integral is analytic in the circle $|z|\le \pi$, 
and using the same technique as was used above for estimating $q_{n,1}(v)$, we find that 
for all $v\in [1/2,1)$ the third integral is bounded from above 
by  $O(3^{-n})=O(w^{-n})$.

Finally, for any $v\in[1/2,1]$ the function $z\mapsto  g(vze^z)$ is analytic in the circle $U_{|w|}$ and continuous on the boundary of this circle (note that $g(z)$ is continuous at $z=-1/e$ (see Proposition \ref{prop_analytic_continuation}). Therefore
\beqq
q_{n,3}(v)=\frac{1}{2\pi \i} \int\limits_{U_{1/2}} g(vze^z)z^{-n-1} \d z
=\frac{1}{2\pi \i} \int\limits_{U_{|w|}} g(vze^z)z^{-n-1} \d z
\eeqq
and we obtain the estimate
\beqq
|q_{n,3}(v)|=\bigg|\frac{1}{2\pi \i} \int\limits_{U_{|w|}} g(vze^z)z^{-n-1} \d z \bigg|\le 
|w|^{-n} C_2 
\eeqq
where $C_2:=\max \{ |g(vze^z)| \; : \; 1/2\le v \le 1, z=|w|e^{2\pi \i t}, \; 0\le t \le \pi \}$. Since 
$w(v)$ is continuous for $v\in [1/2,1]$ and $g(z)$ is continuous in the upper half-plane $\im(z)\ge 0$, we see that $C_2$ is finite and we obtain
\beq\label{formula_qn3}
q_{n,3}(v)=O(w^{-n}), \;\;\; n\to +\infty. 
\eeq 
for all $v \in [1/2,1)$.

Combining \eqref{qn123}, \eqref{formula_qn1}, \eqref{formula_qn2} and \eqref{formula_qn3} gives us \eqref{qn_asymptotics}.
\end{proof}

\begin{remark} 
With some extra work one can improve the results of Propositions \ref{prop_analytic_continuation} and \ref{asymptotics_qn} in the following way:
\begin{itemize}
\item[(i)]  The function $G(z)$ admits an asymptotic expansion at $z=-1/e$ of the form
\beqq
G(z)&\approx &\frac{1}{ \sqrt{2}\pi} \left[ (1+ez)^{-1} +  \sum\limits_{n\ge 0}  (1+ez)^n (a_n \ln(1+ez)+b_n) \right], 
\eeqq
where $a_0=5/24$, $a_1=25/1152$ and $\{a_n\}_{n\ge 2}$ are computable rational numbers. 
\item[(ii)]  As $n\to +\infty$ we have uniformly in $v\in [1/2,1)$
\beqq
 q_n(v)=(-1)^n \frac{\sqrt{2}}{\pi} \re \left[ \frac{w^{-n}}{(1+w)}-\frac{5}{24 n} w^{-n}
+\frac{25}{1152n^2}(1+w)w^{-n} 
 \right]+O(n^{-3}w^{-n})
\eeqq
where $w=W(-1/(ev))$. The above formula remains valid as $v\to 1^-$, in which case we obtain
\beqq
q_n(1)= \frac{\sqrt{2}}{\pi} \left[n+\frac{1}{3}-\frac{5}{24 n}\right]+O(n^{-3}).
\eeqq 
\end{itemize}
\end{remark}

\noindent
For $v\in [0,1/2)$ we define  
\beqq
\xi(v)=w(1-4v^2)=W(-1/(e(1-4v^2))), \;\;\; \alpha(v)=\im \left[\xi(v)\right]. 
\eeqq
These two function will play an important role in the proof of Theorem \ref{thm_main}. 
We summarize some of their properties in the next lemma.
\begin{lemma}\label{lemma_props_xi_alpha}
\mbox{}
\item[(ii)] The function $|\xi(v)|$ is smooth and strictly increasing for $v \in [0,1/2)$. It is analytic in the neighborhood of $v=0$ and satisfies
\beq\label{xi_asymptotics}
\xi(v)=-1+2\sqrt{2} \i  v + \frac{8}{3}v^2+\frac{14 \sqrt{2}}{9} \i v^3+O(v^4), \;\;\; v\to 0^+. 
\eeq 
\item[(ii)] The function $\alpha(v)$ is smooth and strictly increasing for $v \in[0,1/2)$. 
It is analytic in the neighborhood of $v=0$ and satisfies 
\beq\label{alpha_asymptotics}
\alpha(v)=2\sqrt{2}  v +\frac{14 \sqrt{2}}{9}v^3+O(v^5), \;\;\; v\to 0^+. 
\eeq 
\item[(iii)] For any function $h \in L_1(0,1/2)$ we have
\beqq
\lim\limits_{n\to +\infty} \int\limits_0^{\frac{1}{2}} h(v) \xi(v)^{-n} \d v =0. 
\eeqq
\end{lemma}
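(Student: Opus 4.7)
The plan is to derive everything from the branch-point expansion \eqref{expansion_W_near_1_over_e} combined with the already-stated qualitative properties of the extension of $W$ to the cut $(-\infty,-1/e]$. Setting $z(v):=-1/(e(1-4v^2))$, I would first compute $1+ez(v)=-4v^2/(1-4v^2)$, which gives the uniformizer
\beqq
p(z(v))=\sqrt{2(1+ez(v))}=\frac{2\sqrt{2}\,\i\,v}{\sqrt{1-4v^2}},
\eeqq
where the sign of the square root is forced by the convention that $W$ is continuous from $\im z \ge 0$. The map $v\mapsto p(z(v))$ is real-analytic on $(-1/2,1/2)$, purely imaginary, and has modulus smaller than $\sqrt{2}$ exactly when $v^2<1/8$. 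Substituting $p$ into the series \eqref{expansion_W_near_1_over_e} makes $\xi(v)$ real-analytic in a neighbourhood of $v=0$; even powers of $p$ contribute to the real part and odd powers to the imaginary part. Expanding $p=2\sqrt{2}\,\i\, v(1+2v^2+O(v^4))$ and keeping terms through fourth order yields \eqref{xi_asymptotics}, and its imaginary part is \eqref{alpha_asymptotics}.

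For monotonicity on the full interval, I would observe that $v\mapsto z(v)$ is a smooth, strictly decreasing bijection from $(0,1/2)$ onto $(-\infty,-1/e)$. The paper's discussion preceding the present lemma already records that the extended function $W$ is smooth on $(-\infty,-1/e)$ and that both $|W(z)|$ and $\im W(z)$ are strictly decreasing in $z$. Composing these two facts immediately gives that $|\xi(v)|=|W(z(v))|$ and $\alpha(v)=\im W(z(v))$ are smooth and strictly increasing on $(0,1/2)$, while smoothness at the endpoint $v=0$ is already covered by the analytic series expansion obtained above.

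For the third part, I plan to use dominated convergence. Since $\xi(0)=W(-1/e)=-1$ gives $|\xi(0)|=1$, and $|\xi(v)|$ is strictly increasing by the claim above, one has $|\xi(v)|>1$ for every $v\in(0,1/2)$, hence $\xi(v)^{-n}\to 0$ pointwise almost everywhere on $(0,1/2)$. The uniform bound $|\xi(v)^{-n}|\le 1$ yields the domination $|h(v)\xi(v)^{-n}|\le |h(v)|\in L_1(0,1/2)$, and dominated convergence delivers the conclusion.

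The main obstacle is really only bookkeeping: one must verify that the principal branch of $\sqrt{2(1+ez)}$ together with the ``continuous from above'' prescription for $W$ actually selects the $+\i$ sign in the formula for $p(z(v))$, so that $\alpha(v)>0$ for small positive $v$ and the coefficient $2\sqrt{2}$ in \eqref{alpha_asymptotics} comes out positive. Once that convention is pinned down, the rest is mechanical substitution into \eqref{expansion_W_near_1_over_e} together with one invocation of dominated convergence.
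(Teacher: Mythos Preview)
Your proposal is correct and follows precisely the route the paper indicates: the paper's own proof is a two-sentence sketch saying that the expansions follow from \eqref{expansion_W_near_1_over_e}, that \eqref{alpha_asymptotics} is a corollary of \eqref{xi_asymptotics}, and that item (iii) is ``obvious'' from $|\xi(0)|=1$ and strict monotonicity of $|\xi(v)|$. You have simply filled in the details---computing $p(z(v))$ explicitly, checking the sign convention for the square root, substituting into the branch-point series, composing the monotonicity statements already recorded for $W$ on the cut with the decreasing map $v\mapsto z(v)$, and making the ``obvious'' part of (iii) precise via dominated convergence---so there is no substantive difference in approach.
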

\begin{proof}
Items (i) and (ii) follow easily from properties of Lambert W-function. 
In particular the series expansion \eqref{xi_asymptotics} follows from \eqref{expansion_W_near_1_over_e}, while 
\eqref{alpha_asymptotics} is a simple corollary of \eqref{xi_asymptotics}. Item (iii) is obvious, given that
$|\xi(0)|=1$ and $|\xi(v)|$ is a strictly increasing function.  
\end{proof}

\begin{proposition}\label{prop_equivalence} 
Under the assumptions of Theorem \ref{thm_main}, $f_n(x)\to c$ as $n\to \infty$ if and only if
for any $\epsilon \in (0,1/4)$ 
\beq\label{convergence_condition}
\lim\limits_{n\to \infty} 
\int\limits_{0}^{\epsilon} |\xi(v)|^{-n} \frac{\sin(n\alpha(v))}{\alpha(v)} 
 \left[ f\left(-x\log_2\left(\tfrac{1}{2}+v\right)\right)
+ f\left(-x\log_2\left(\tfrac{1}{2}-v\right)\right)-2c \right]
 \d v =0.
\eeq
\end{proposition}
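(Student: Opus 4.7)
The overall strategy is to prove that
$$
f_n(x) - c = \frac{2\sqrt 2}{\pi}\int_0^\epsilon |\xi(v)|^{-n}\,\frac{\sin(n\alpha(v))}{\alpha(v)}\,B(v)\,\d v + o(1),
$$
where $B(v) := f\bigl(-x\log_2(\tfrac12+v)\bigr) + f\bigl(-x\log_2(\tfrac12-v)\bigr) - 2c$; the ``if and only if'' then follows immediately. First I substitute $e^{-u}=\tfrac12+v$ on $(0,\ln 2)$ and $e^{-u}=\tfrac12-v$ on $(\ln 2,\infty)$ in \eqref{f_n_q_n_formula_1}; both branches map $4e^{-u}(1-e^{-u})$ to $1-4v^2$ and sweep out $v\in (0,1/2)$. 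Using that $f_n\equiv c$ whenever $f\equiv c$ (an immediate consequence of \eqref{ckn_linear_conditions}), one obtains $f_n(x)-c = \int_0^{1/2} q_n(1-4v^2) A(v)\,\d v$ where a short partial-fraction manipulation yields $A(v) = \tfrac{2B(v)}{1-4v^2} + \tfrac{4v(f_-(v)-f_+(v))}{1-4v^2}$ with $f_\pm(v):=f(-x\log_2(\tfrac12\pm v))$. Splitting at $v=\epsilon$, Lemma \ref{lemma_estimate_q_n_over_v} bounds the $[\epsilon,1/2]$ contribution in absolute value by $Cb^{-n}\int_0^\infty 4e^{-u}(1-e^{-u})|f(xu/\ln 2)-c|\,\d u$, which is finite because the Laplace transform exists on $(0,\infty)$.

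On $[0,\epsilon]$ I apply Proposition \ref{asymptotics_qn}. Writing $\xi=\xi(v)$, $\alpha=\im\xi$, $\beta=1+\re\xi$, and $\phi=\pi-\arg\xi$ (so that $(-1)^n\xi^{-n} = |\xi|^{-n}e^{\i n\phi}$ and $1+\xi = \beta+\i\alpha$), a direct computation gives
$$
(-1)^n\re\!\Bigl[\tfrac{\xi^{-n}}{1+\xi}\Bigr] = |\xi|^{-n}\,\frac{\beta\cos(n\phi) + \alpha\sin(n\phi)}{\alpha^2+\beta^2}.
$$
The $O(|\xi|^{-n})$ error term integrated against $A\in L^1(0,\epsilon)$ is $o(1)$ by dominated convergence (since $|\xi|^{-n}\le 1$ and $|\xi(v)|^{-n}\to 0$ pointwise for $v>0$). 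The $\beta\cos(n\phi)$ summand contributes an integral of the form $(-1)^n\re\!\int_0^\epsilon\xi^{-n}H(v)\,\d v$ with $H\in L^1$, because $\beta/(\alpha^2+\beta^2)$ is bounded; this vanishes by Lemma \ref{lemma_props_xi_alpha}(iii) (and its complex-conjugate counterpart). Likewise, the entire $\tfrac{4v(f_--f_+)}{1-4v^2}$ component of $A(v)$ produces $\xi^{-n}$-integrals with bounded coefficients (exploiting that $v/\alpha$, $\alpha^2/(\alpha^2+\beta^2)$ and $(1-4v^2)^{-1}$ are all bounded on $[0,\epsilon]$), so it too is killed by Lemma \ref{lemma_props_xi_alpha}(iii).

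What remains is $\frac{2\sqrt 2}{\pi}\int_0^\epsilon|\xi|^{-n}\frac{\sin(n\phi)}{\alpha}\mu(v)B(v)\,\d v$, where $\mu(v):=\frac{\alpha^2}{(\alpha^2+\beta^2)(1-4v^2)}$ is smooth with $\mu(0)=1$ and $\mu(v)-1=O(v^2)$. Replacing $\mu$ by $1$ introduces an integrand bounded by $Cv|\xi|^{-n}|B|$ (using $|\sin(n\phi)|(\mu-1)/\alpha = O(v)$), which is $o(1)$ by dominated convergence. The final replacement $\sin(n\phi)\mapsto\sin(n\alpha)$ exploits the expansions \eqref{xi_asymptotics}--\eqref{alpha_asymptotics}, which give $\phi(v)-\alpha(v) = \alpha(v)\beta(v) + O(v^5) = O(v^3)$, hence $|\sin(n\phi)-\sin(n\alpha)|/\alpha \le Cnv^2$. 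The resulting error is handled by a dyadic split at $v = M/\sqrt n$: on $[0,M/\sqrt n]$ one uses $|\xi|^{-n}\le 1$ together with $\int_0^{M/\sqrt n}|B|\,\d v\to 0$ (absolute continuity of $B\in L^1$), and on $[M/\sqrt n,\epsilon]$ the Gaussian bound $|\xi(v)|^{-n}\le e^{-cnv^2}$ forces $nv^2|\xi|^{-n}\le M^2 e^{-cM^2}$, small for large $M$.

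The main obstacle is precisely this last $\sin(n\phi)\mapsto\sin(n\alpha)$ step. Because we only assume local integrability of $f$, the weight $1/\alpha(v)\sim 1/v$ is not $B$-integrable, so the error cannot be dominated in the naive $L^1$ sense; instead the $o(1)$ must be extracted from the interplay between the cubic smallness of $\phi-\alpha$, the Gaussian-type concentration of $|\xi(v)|^{-n}$ near $v=0$, and the absolute continuity of $B$ at the origin, which is what the dyadic split at the critical scale $v\sim n^{-1/2}$ accomplishes. Once this step is complete, the entire $o(1)$ remainder is unconditional and the proposition follows.
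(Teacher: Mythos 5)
Your proposal reproduces the paper's proof in all essentials: truncation of the representation \eqref{f_n_q_n_formula_1} away from $u=\ln 2$ via Lemma \ref{lemma_estimate_q_n_over_v} (using exactness on constants and convergence of the Laplace transform), the change of variables $e^{-u}=\tfrac12\pm v$ turning the kernel into $q_n(1-4v^2)$, the algebraic split of the density into the even combination $B(v)$ plus a $v$-weighted remainder (your $A(v)=\tfrac{2B(v)}{1-4v^2}+\tfrac{4v(f_--f_+)}{1-4v^2}$ is a cosmetic variant of the paper's $2g+2v\tilde g$ identity, with the factor $(1-4v^2)^{-1}$ absorbed into your correction weight $\mu$), and then Proposition \ref{asymptotics_qn} plus Lemma \ref{lemma_props_xi_alpha}(iii)/dominated convergence to annihilate the $\cos$-component, the $O(|\xi|^{-n})$ errors, and the $v$-weighted part. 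All of that is sound and matches the paper.

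The one genuine deviation is your final step, and there you have miscomputed: the ``main obstacle'' you describe does not exist. For $z<-1/e$ the functional equation $W(z)=z\,e^{-W(z)}$ gives $\arg W(z)=\pi-\im W(z)$, hence $\arg\xi(v)=\pi-\alpha(v)$ \emph{exactly}, i.e.\ your $\phi(v)$ and $\alpha(v)$ are identical; this one-line observation is precisely how the paper obtains \eqref{asymptotics_qn_14v2} with $\sin(n\alpha(v))$ already in place. Your claimed expansion $\phi-\alpha=\alpha\beta+O(v^5)$ is false: writing $\xi=-(1-\beta)+\i\,\alpha$ one has $\phi=\arctan\bigl(\alpha/(1-\beta)\bigr)=\alpha+\alpha\beta-\tfrac{1}{3}\alpha^3+O(v^5)$, and by \eqref{xi_asymptotics}--\eqref{alpha_asymptotics} the cubic terms cancel, since $\alpha\beta=2\sqrt2\,v\cdot\tfrac83 v^2+O(v^5)=\tfrac{16\sqrt2}{3}v^3+O(v^5)$ while $\tfrac13\alpha^3=\tfrac13(2\sqrt2\,v)^3+O(v^5)=\tfrac{16\sqrt2}{3}v^3+O(v^5)$ --- consistent with $\phi\equiv\alpha$. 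You are rescued by the fact that downstream you only invoke the weaker bound $\phi-\alpha=O(v^3)$, which is (vacuously) true because the difference is identically zero, and your two-scale control of the resulting phantom error --- the dyadic split at $v\sim n^{-1/2}$, the Gaussian bound $|\xi(v)|^{-n}\le e^{-cnv^2}$ (valid, as $|\xi(v)|^2=1+\tfrac83v^2+O(v^4)$ and $|\xi|$ is increasing), and absolute continuity of $\int|B|$ --- is internally correct. So the proof stands once the faulty Taylor claim is replaced by the exact identity, which simultaneously deletes your entire last step; the lesson is that the phase in Proposition \ref{asymptotics_qn} is not approximately but exactly $n\alpha(v)$, and this exactness is what makes the paper's argument short.
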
 
\begin{proof}
Assume that $\epsilon \in (0,1/4)$ and define 
 $u_0:=-\ln(\tfrac{1}{2}+\epsilon)$ and $u_1:=-\ln(\tfrac{1}{2}-\epsilon)$.
Note that $0<u_0<\ln(2)<u_1<\infty$. The function $u\mapsto 4e^{-u}(1-e^{-u})$ is strictly increasing 
for $0<u<\ln(2)$ and strictly decreasing for $\ln(2)<u$, and its value at $u=\ln(2)$ is one. 
We conclude that there exists $\delta=\delta(\epsilon)>0$, such that for
 all  $u\in   [0,u_0]\cup[u_1,\infty)$ we have $0<4e^{-u}(1-e^{-u})<1-\delta$. Using Lemma 
 \ref{lemma_estimate_q_n_over_v} we find that there exist $b>1$ and $C>0$ such that
 \beqq
|q_n\left(4e^{-u} (1-e^{-u})\right)|<4 C b^{-n} e^{-u} (1-e^{-u})<4 C b^{-n} e^{-u}
 \eeqq
 for all $u\in   [0,u_0]\cup[u_1,\infty)$. This fact and our assumptions on the function $f$ guarantee that
\beqq
\lim\limits_{n\to +\infty} \int\limits_{[0,u_0]\cup[u_1,\infty) }
q_n\left(4e^{-u} (1-e^{-u})\right) f\left(\frac{x u}{\ln(2)}\right) \d u=0. 
\eeqq
Thus we have established the following result:
\beqq
f_n(x)= \int\limits_{u_0}^{u_1} 
q_n\left(4e^{-u} (1-e^{-u})\right) f\left(\frac{x u}{\ln(2)}\right) \d u+o(1), \;\;\; n\to +\infty.
\eeqq
As we have pointed out in the introduction, Gaver-Stehfest approximations are exact for constant functions: 
if $f(x)\equiv C$ then $f_n(x)\equiv C$ for all $n\ge 1$. 
This result and the above formula imply 
\beq\label{f_n_truncated_v0_v1_2}
f_n(x)-c=
\int\limits_{u_0}^{u_1} 
q_n\left(4e^{-u} (1-e^{-u})\right) \left[ f\left(\frac{x u}{\ln(2)}\right) -c \right] \d u
+o(1), \;\;\; n\to +\infty. 
\eeq

Our next goal is to simplify the integral in \eqref{f_n_truncated_v0_v1_2}. 
We separate this integral into two parts: 
the integral over $[u_0,\ln(2)]$ and the integral over $[\ln(2),u_1]$. For the first \{resp. second\} integral 
we change the variable of integration
$u=-\ln(\tfrac{1}{2}+v)$  \{resp. $u=-\ln(\tfrac{1}{2}-v)$\} and after combining the two parts we obtain
\beq\label{f_n_truncated_v0_v1_5}
f_n(x)-c=
\int\limits_{0}^{\epsilon} 
q_n(1-4v^2) 
\left[ \frac{f(-x\log_2(\tfrac{1}{2}+v))-c}{\tfrac{1}{2}+v}+
\frac{f(-x\log_2(\tfrac{1}{2}-v))-c}{\tfrac{1}{2}-v} \right] \d v+o(1), \;\;\; n\to +\infty
\eeq

For our next step, we will need the following result:
\beq\label{asymptotics_qn_14v2}
q_n(1-4v^2)=\frac{\sqrt{2}}{\pi}|\xi(v)|^{-n}\frac{\sin(n\alpha(v))}{\alpha(v)}+O(\xi(v)^{-n}),
\eeq
uniformly for all $v\in (0,1/4]$. Note that the Lambert W-function satisfies $W(z)=z\exp(-W(z))$, thus for $z<-1/e$ we have
${\textnormal{arg}}(W(z))=\pi-\im [W(z)]$, which implies ${\textnormal{arg}}(\xi(v))=\pi-\alpha(v)$ This gives us
\beqq
\xi(v)^{-n}=|\xi(v)|^{-n} e^{- \i n (\pi-\alpha(v))}= (-1)^n |\xi(v)|^{-n} \left(\cos(n \alpha(v))+\i \times \sin(n \alpha(v))\right). 
\eeqq
Combining the above formula with the identity 
\beqq
\frac{1}{1+\xi}=\frac{1+\bar {\xi}}{|1+\xi|^2}=\frac{1+\re(\xi)}{|1+\xi|^2}-\i \frac{\im(\xi)}{|1+\xi|^2}
\eeqq
we obtain
\beqq
\re\left[ \frac{\xi(v)^{-n}}{1+\xi(v)} \right]=|\xi(v)|^{-n} \left[\cos(n\alpha(v)) 
\frac{1+\re(\xi(v))}{|1+\xi(v)|^2} +
\sin(n\alpha(v)) 
\frac{\alpha(v)}{|1+\xi(v)|^2}\right].
\eeqq
Formulas \eqref{xi_asymptotics} and \eqref{alpha_asymptotics} show that uniformly for all $v\in (0,1/4]$ 
\beqq
\frac{1+\re(\xi(v))}{|1+\xi(v)|^2}=O(1), \;\;\;
\frac{\alpha(v)}{|1+\xi(v)|^2}=\frac{1}{\alpha(v)}+O(1)
\eeqq
thus 
\beqq
\re\left[ \frac{\xi(v)^{-n}}{1+\xi(v)} \right]=(-1)^{n}|\xi(v)|^{-n}\frac{\sin(n\alpha(v))}{\alpha(v)}+O(\xi(v)^{-n})
\eeqq
which together with \eqref{qn_asymptotics} implies \eqref{asymptotics_qn_14v2}.

Next, we define
\beq \label{def_gxvc}
g(x,v,c)&:=& f\left(-x\log_2\left(\tfrac{1}{2}+v\right)\right)
+ f\left(-x\log_2\left(\tfrac{1}{2}-v\right)\right)-2c, \\ \nonumber
\tilde g(x,v,c)&:=& -\frac{f\left(-x\log_2\left(\frac{1}{2}+v\right)\right)-c}{\frac{1}{2}+v}
+  \frac{f\left(-x\log_2\left(\frac{1}{2}-v\right)\right)-c}{\frac{1}{2}-v}.
\eeq
It is easy to check that 
\beqq
\left[ \frac{f(-x\log_2(\tfrac{1}{2}+v))-c}{\tfrac{1}{2}+v}+
\frac{f(-x\log_2(\tfrac{1}{2}-v))-c}{\tfrac{1}{2}-v} \right]\equiv 2g(x,v,c)+2v \tilde g(x,v,c),
\eeqq
therefore
\beq\label{f_n_truncated_v0_v1_3}
f_n(x)-c=
2\int\limits_{0}^{\epsilon} q_n(1-4v^2)  g(x,v,c) \d v+
2\int\limits_{0}^{\epsilon} q_n(1-4v^2) v \tilde g(x,v,c) \d v+o(1), \;\;\; n\to +\infty.
\eeq
We use \eqref{asymptotics_qn_14v2} and find that
\beqq
\int\limits_{0}^{\epsilon} q_n(1-4v^2) v \tilde g(x,v,c) \d v
=\frac{\sqrt{2}}{\pi}
\int\limits_{0}^{\epsilon} \left[ |\xi(v)|^{-n} \sin(n\alpha(v)) \frac{v}{\alpha(v)}+v O(\xi(v)^{-n}) \right] \tilde g(x,v,c) \d v\to 0,
\eeqq
as $n\to +\infty$, due to Lemma \ref{lemma_props_xi_alpha}(iii) (note that the function
$v\mapsto \tilde g(x,v,c)$ is integrable and the function $v/\alpha(v)$ is bounded).  
This fact combined with formula \eqref{f_n_truncated_v0_v1_3} give us \eqref{convergence_condition}. 
\end{proof}

\vspace{0.15cm}
\noindent
{\bf Proof of Theorem \ref{thm_main}, (i) and (ii):}
Theorem \ref{thm_main}, (i) follows directly from Proposition \ref{prop_equivalence}.
Let us prove Theorem \ref{thm_main}, (ii).  Formulas \eqref{convergence_condition} and \eqref{def_gxvc}
give us
\beqq
f_n(x)-c&=&
2\int\limits_{0}^{\epsilon} |\xi(v)|^{-n} \frac{\sin(n\alpha(v))}{\alpha(v)} g(x,v,c) \d v+o(1)\\
&=&
2\int\limits_{0}^{\epsilon} |\xi(v)|^{-n} \left[\sin(n\alpha(v))\times \frac{v}{\alpha(v)} \times \frac{g(x,v,c)}{v} \right] \d v+o(1).
\eeqq
The functions $\sin(n\alpha(v))$ and $v/\alpha(v)$ are bounded  
and the function $v\mapsto g(x,v,c)/v$ is integrable over $[0,\epsilon)$, thus applying Lemma \ref{lemma_props_xi_alpha}(iii) we see that the integral in the right-hand side of the above formula converges 
to zero as $n\to+\infty$, therefore $f_n(x)\to c$ as $n\to +\infty$. 
\qed

\vspace{0.15cm}
\noindent
{\bf Proof of Theorem \ref{thm_main},(iii):}
Assume that for some $\delta>0$ the function $f$ has bounded variation on the interval $[x-\delta,x+\delta]$. We will need the following simple property of functions of bounded variation:

\vspace{0.1cm}
\noindent
{\it If $f(y)$ has bounded variation for $y\in [a,b]$ and $g(x)$ is monotone for $x\in [c,d]$ and satisfies 
$g([c,d]) \subseteq [a,b]$, then $f(g(x))$ has bounded variation for $x\in [c,d]$.}
 \vspace{0.1cm}
 
 \noindent
Using the above property we see that there exists $\epsilon_1 \in (0,1/4)$ such that both functions 
$v\mapsto f(-x \log_2(\tfrac{1}{2}- v))$ and $v\mapsto f(-x \log_2(\tfrac{1}{2}+v))$ 
have bounded variation in the interval $v\in [0,\epsilon_1]$. 

Formula \eqref{alpha_asymptotics} shows that there exists $\epsilon_2 \in (0,1/4)$, such that
 $\alpha''(v)>0$ for all $v \in (0,\epsilon_2)$.  We set $\epsilon_3=\min(\epsilon_1, \epsilon_2)$
 and take $\epsilon<\epsilon_3$ to be a small positive number (to be specified later). 
 We rewrite formula \eqref{convergence_condition} where $\epsilon$ is chosen in the above way
and  $c=\frac{1}{2} \left( f(x+0)+f(x-0)\right)$:  
\beq\label{eqn_In_pm} 
f_n(x)-c&=&\int\limits_{0}^{\epsilon} |\xi(v)|^{-n} \frac{\sin(n\alpha(v))}{\alpha(v)} 
\left[ f(-x \log_2(\tfrac{1}{2}-v))-f(x+0) \right]\d v
\\&+& \nonumber
\int\limits_{0}^{\epsilon} |\xi(v)|^{-n} \frac{\sin(n\alpha(v))}{\alpha(v)} 
\left[ f(-x \log_2(\tfrac{1}{2}+v))-f(x-0) \right]\d v+o(1), \;\;\; n\to +\infty.
\eeq
Since the two functions $v\mapsto f(-x \log_2(\tfrac{1}{2}-v))$ and $v\mapsto f(-x \log_2(\tfrac{1}{2}+v))$  have bounded variation 
for $v\in [0,\epsilon_3]$, there exist four increasing functions $h_i(v)$, $1\le i \le 4$, satisfying 
$h_i(0+)=0$, such that $f(-x \log_2(\tfrac{1}{2}-v))-f(x+0)=h_1(v)-h_2(v)$ 
and $f(-x \log_2(\tfrac{1}{2}+v))-f(x-0)=h_3(v)-h_4(v)$ 
for all $v\in [0,\epsilon_3]$. We rewrite \eqref{eqn_In_pm}  in the form
\beq\label{f_n_J_n}
f_n(x)-c=J_{n,1}-J_{n,2}+J_{n,3}-J_{n,4}+o(1), \;\;\; n\to +\infty, 
\eeq
where 
\beq\label{def_Jni}
J_{n,i}:=\int\limits_{0}^{\epsilon} |\xi(v)|^{-n} \frac{\sin(n\alpha(v))}{\alpha(v)} 
h_i(v)\d v. 
\eeq
Since $h_1$ is positive and increasing, we apply the mean value theorem and conclude that there exists
 $\theta_1 \in (0,\epsilon)$ such that
\beq\label{Jni_f1}
J_{n,1}=\int\limits_0^{\epsilon} 
|\xi(v)|^{-n}\frac{\sin(n\alpha(v))}{\alpha(v)} h_1(v) \d v=
h_1(\epsilon)\int\limits_{\theta_1}^{\epsilon} 
|\xi(v)|^{-n}\frac{\sin(n\alpha(v))}{\alpha(v)}  \d v.
\eeq
Recall that $\alpha''(0)>0$ for $v\in [0,\epsilon_3]$
and $\alpha'(0)=2\sqrt{2}$ (see \eqref{alpha_asymptotics}), which shows that $\alpha'(v)$ is positive and increasing for $v\in [0,\epsilon_3]$, which in turn implies that 
the function $|\xi(v)|^{-n}/\alpha'(v)$ is positive and decreasing for $v\in [0,\epsilon_3]$. Applying the mean-value theorem again, we find that
there exists some $\theta_2 \in (\theta_1,\epsilon)$ such that
\beq\label{Jni_f2}
\nonumber
\int\limits_{\theta_1}^{\epsilon} 
|\xi(v)|^{-n}\frac{\sin(n\alpha(v))}{\alpha(v)}  \d v
&=&
\int\limits_{\theta_1}^{\epsilon} 
\left[\frac{|\xi(v)|^{-n}}{\alpha'(v)} \right] \times \left[\frac{\sin(n\alpha(v))}{\alpha(v)} \alpha'(v)\right] \d v
=\frac{|\xi(\theta_1)|^{-n}}{\alpha'(\theta_1)} 
\int\limits_{\theta_1}^{\theta_2} 
\frac{\sin(n\alpha(v))}{\alpha(v)} \alpha'(v) \d v\\
&=&
\frac{|\xi(\theta_1)|^{-n}}{\alpha'(\theta_1)}
\int\limits_{n\alpha(\theta_1)}^{n\alpha(\theta_2)} 
\frac{\sin(u)}{u} \d u=
\frac{|\xi(\theta_1)|^{-n}}{\alpha'(\theta_1)} \left[{\textnormal{Si}}(n\alpha(\theta_1))-
{\textnormal{Si}}(n\alpha(\theta_2))\right]
\eeq
where ${\textnormal{Si}}(y):=\int_0^y \sin(u) u^{-1} \d u$ is the sine-integral function. 
Since $|\xi(\theta_1)|>1$ and $\alpha'(\theta_1)>2\sqrt{2}$ 
and $|{\textnormal{Si}}(x)|<\pi$ for all $x\ge 0$, we conclude from 
\eqref{def_Jni}, \eqref{Jni_f1}  and \eqref{Jni_f2} that 
\beqq
|J_{n,1}|<\frac{\pi}{\sqrt{2}} h_1(\epsilon).
\eeqq
In the same way we can obtain corresponding estimates for $J_{n,i}$, $i=2,3,4$, therefore 
\eqref{f_n_J_n} gives us
\beqq
\limsup\limits_{n\to +\infty} |f_n(x)-c|< \frac{\pi}{\sqrt{2}} (h_1(\epsilon)+h_2(\epsilon)+h_3(\epsilon)+h_4(\epsilon)).
\eeqq
Sine $h_i(0+)=0$, the right-hand side can be made arbitrarily small provided that $\epsilon$ is small enough. This shows that $f_n(x) \to c$ as $n\to +\infty$. 
\qed



\begin{thebibliography}{10}

\bibitem{Abate_Whitt}
J.~Abate and W.~Whitt.
\newblock The {F}ourier-series method for inverting transforms of probability
  distributions.
\newblock {\em Queueing Systems}, 10(1-2):5--87, 1992.

\bibitem{Badescu2005}
A.~Badescu, L.~Breuer, A.~Da~Silva~Soares, G.~Latouche, M.-A. Remiche, and
  D.~Stanford.
\newblock Risk processes analyzed as fluid queues.
\newblock {\em Scandinavian Actuarial Journal}, 2005(2):127--141, 2005.

\bibitem{Cohen}
{\relax A. M}.~Cohen.
\newblock {\em Numerical methods for {L}aplace transform inversion}, volume~5
  of {\em Numerical Methods and Algorithms}.
\newblock Springer, 2007.

\bibitem{Conway}
J.~B. Conway.
\newblock {\em Functions of Complex variable I}, volume~11 of {\em Graduate
  texts in Mathematics}.
\newblock Springer, second edition, 1978.

\bibitem{Corless96}
R.~M. Corless, G.~H. Gonnet, D.~E.~G. Hare, D.~J. Jeffrey, and D.~E. Knuth.
\newblock On the {L}ambert {W} function.
\newblock In {\em Advances in Computational {M}athematics}, pages 329--359,
  1996.

\bibitem{Davies1979}
B.~Davies and B.~Martin.
\newblock Numerical inversion of the {L}aplace transform: a survey and
  comparison of methods.
\newblock {\em Journal of Computational Physics}, 33(1):1 -- 32, 1979.

\bibitem{Erdelyi1953}
A.~Erd\'elyi, W.~Magnus, F.~Oberhettinger, and F.~G. Tricomi.
\newblock {\em Higher {T}ranscendental Functions}, volume~1.
\newblock McGraw-Hill, 1953.

\bibitem{Gaver1966}
D.~P. {Gaver, Jr.}
\newblock Observing stochastic processes, and approximate transform inversion.
\newblock {\em Operations Research}, 14(3):pp. 444--459, 1966.

\bibitem{Jeffrey2007}
I.~S. Gradshteyn and I.~M. Ryzhik.
\newblock {\em Table of integrals, series, and products}.
\newblock Elsevier/Academic Press, Amsterdam, seventh edition, 2007.

\bibitem{Jacquot1983}
R.~G. Jacquot, J.~W. Steadman, and C.~N. Rhodine.
\newblock The {G}aver-{S}tehfest algorithm for approximate inversion of
  {L}aplace transforms.
\newblock {\em Circuits Systems Magazine}, 5(1):4--8, 1983.

\bibitem{Kawakatsu2005}
H.~Kawakatsu.
\newblock Numerical inversion methods for computing approximate p-values.
\newblock {\em Computational Economics}, 26(3-4):103--116, 2005.

\bibitem{Kawata}
T.~Kawata.
\newblock {\em Fourier analysis in probability theory}, volume~15 of {\em
  Probability and Mathematical Statistics}.
\newblock Academic Press, 1972.

\bibitem{Knight1982}
J.~H. Knight and A.~P. Raiche.
\newblock Transient electromagnetic calculations using the {G}aver-{S}tehfest
  inverse {L}aplace transform method.
\newblock {\em Geophysics}, 47(1):47--50, 1982.

\bibitem{Kou2003}
S.~G. Kou and H.~Wang.
\newblock First passage times of a jump diffusion process.
\newblock {\em Advances in Applied Probability}, 35(2):pp. 504--531, 2003.

\bibitem{Masol2010}
V.~Masol and J.~L. Teugels.
\newblock Numerical accuracy of real inversion formulas for the {L}aplace
  transform.
\newblock {\em Journal of Computational and Applied Mathematics}, 233(10):2521
  -- 2533, 2010.

\bibitem{Montella2008}
C.~Montella.
\newblock {LSV} modelling of electrochemical systems through numerical
  inversion of {L}aplace transforms. {I} - the {GS–LSV} algorithm.
\newblock {\em Journal of Electroanalytical Chemistry}, 614(1–2):121 -- 130,
  2008.

\bibitem{Schoutens2011}
W.~Schoutens and G.~V. Damme.
\newblock The $\beta$-variance gamma model.
\newblock {\em Review of Derivatives Research}, 14(3):263--282, 2011.

\bibitem{Stehfest1970}
H.~Stehfest.
\newblock Algorithm 368: {N}umerical inversion of {L}aplace transforms.
\newblock {\em Commun. ACM}, 13(1):47--49, 1970.

\bibitem{Stehfest2}
H.~Stehfest.
\newblock Remark on algorithm 368: {N}umerical inversion of laplace transforms.
\newblock {\em Commun. ACM}, 13(10):624, 1970.

\bibitem{Whitt06}
W.~Whitt.
\newblock A unified framework for numerically inverting {L}aplace transforms.
\newblock {\em INFORMS Journal on Computing}, 18:408--421, 2006.

\end{thebibliography}

\end{document}